\newtheorem{theorem}{Theorem}
\newtheorem{lemma}{Lemma}
\newtheorem{definition}{Definition}
\newtheorem{remark}{Remark}
\begin{document}
\title{Finite horizon stochastic $H_2/H_\infty$ control for continuous-time mean-field systems with Poisson jumps}
\author{Huimin Han,
 Shaolin Ji,
 and Weihai Zhang, \IEEEmembership{Senior Member, IEEE}
\thanks{Corresponding authors: Shaolin Ji;Weihai Zhang.}
\thanks{Huimin Han and Shaolin Ji are with Zhongtai Securities Institute for Financial Studies, Shandong University, Jinan 250100 P. R. China (e-mails: hanhuiminhhm@mail.sdu.edu.cn, jsl@sdu.edu.cn). }
\thanks{Weihai Zhang is with College of Electrical Engineering and Automation, Shandong University of Science and Technology, Qingdao 266590 P. R. China (e-mail: w\_hzhang@163.com).}}
\maketitle
\begin{abstract}
The stochastic $H_2/H_\infty$ control problem for continuous-time mean-field stochastic differential equations with Poisson jumps over finite horizon is investigated in this paper. Continuous and jump diffusion terms in the system depend not only on the state but also on the control input, external disturbance, and mean-field components. By employing the quasi-linear technique and the method
of completing the square, a mean-field stochastic jump bounded real lemma of the system is derived, which plays a crucial role in solving stochastic $H_2/H_\infty$ control problem. It is demonstrated in this study
that the feasibility of the stochastic $H_2/H_\infty$ control problem is equivalent to the solvability of four sets of cross-coupled generalized differential Riccati equations, thus generalizing the previous results to mean-field jump-diffusion systems. To validate the proposed
methodology, a numerical simulation example is provided to illustrate the effectiveness of the control strategy. The results establish a systematic approach for designing $H_2/H_\infty$ controllers that simultaneously guarantee the robustness against disturbances and optimal performance for interacting particle systems.
\end{abstract}
\begin{IEEEkeywords}
 $H_{2}/H_{\infty}$ control , mean-field stochastic differential equation , Poisson jumps , Riccati differential equation , reinforcement learning
\end{IEEEkeywords}
\section{Introduction}
\label{sec:introduction}
This paper addresses the design of stochastic $H_{2}/H_{\infty}$ control
for mean-field Poisson jump systems over a finite time horizon, as well as the model-free $H_{\infty}$ control design for linear mean-field systems.
Consider System (\ref{xstateequation}), where $u(t)$, $v(t)$, and $z(t)$ denote the control input, external disturbance, and controlled output, respectively. $\{\tilde N_{p}\}_{0\leq t \leq T}$ denote a Poisson random martingale measure and $\{W(t)\}_{0\leq t \leq T}$ represent a one-dimensional standard Brownian motion defined on the complete probability space $(\Omega,\mathcal{F},P)$. The system dynamics are then given by:
\begin{equation}\label{xstateequation}
\left\{
\begin{aligned}
&dx(t)=\left\{A(t)x(t)+\bar{A}(t)\mathbb{E}[x(t)]+B_{2}(t)u(t)+\right.\\
&\left.\bar{B}_{2}(t)\mathbb{E}[u(t)]+
B_{1}(t)v(t)+\bar{B}_{1}(t)\mathbb{E}[v(t)]\right\}dt +\left\{C(t)x(t)\right.\\
&+\bar{C}(t)\mathbb{E}[x(t)]+D_{2}(t)u(t)+\bar{D}_{2}(t)\mathbb{E}[u(t)]+
D_{1}(t)v(t)+\\
&\left.\bar{D}_{1}(t)\mathbb{E}[v(t)]\right\}dW(t)+\int_G\left\{E(t,\theta)x(t-)+\right.\\
&\bar{E}(t,\theta)\mathbb{E}[x(t-)]+F_{2}(t,\theta)u(t)+\bar{F_{2}}(t,\theta)\mathbb{E}[u(t)]+\\
&\left.F_{1}(t,\theta)v(t)+\bar{F_{1}}(t,\theta)\mathbb{E}[v(t)]\right\}\tilde{N}_p(d\theta,dt),\\
&x(0)=x_{0}, \\
&z(t)=\left(
\begin{array}{l}
M(t) x(t) \\
N(t) u(t)
\end{array}\right),
\end{aligned}
\right.
\end{equation}
where $T< \infty, t\in[0,T],$ and $N^{\prime}(t) N(t)=I$. All coefficients in (\ref{xstateequation}) constitute deterministic continuous matrices with appropriate dimensions.

The distinctive feature of equation (\ref{xstateequation}) lies in its incorporation of mean-field terms $\mathbb{E}[x(t)]$, $\mathbb{E}[u(t)]$, and $\mathbb{E}[v(t)]$, which fundamentally differentiates it from conventional stochastic differential equations (SDEs). Research on this system and its applications includes mean-field games \cite{2007MF}, \cite{mfg}, mean-field backward stochastic differential equations \cite{2009},\cite{2009PDE} and mean-field control systems \cite{2011},\cite{2011mf},\cite{2013},\cite{2013a} among others. Poisson jump processes model discontinuous dynamics prevalent in practical applications, particularly in financial markets. System (\ref{xstateequation}) is particularly
important in the field of financial optimization, as it accurately describes risky asset pricing and strategic interactive behaviors. We refer the reader to \cite{jump} to obtain comprehensive information about financial modeling with jump. Although existing literature extensively addresses control problems \cite{2015},  \cite{wang_2023}, \cite{stackelberg}, there has been limited research on robust control for (\ref{xstateequation}). Based on the assumptions that disturbances cannot be detected and optimization is performed under worst-case disturbance scenarios, this paper investigates the stochastic $H_2/H_\infty$ control problem for system (\ref{xstateequation}) using a Nash game approach.

Our results extend the framework for SDEs by incorporating mean-field terms $\mathbb{E}[x(t)],\mathbb{E}[u(t)]$, and $\mathbb{E}[v(t)]$ into the state equation, leading to coupled Riccati equations (\ref{RE1})-(\ref{RE22}) with respect to $(P,Q)$ instead of the Riccati equations solely dependent on $P$. Within the linear stochastic differential game framework, introducing mean-field terms while having diffusion terms involving $(u, v)$ imposes stringent conditions where multiple algebraic equations ($\Sigma_0(P_1), \Sigma_2(P_1), \tilde{\Sigma}_0(P_2), \tilde{\Sigma}_2(P_2))$ in (\ref{RE1})-(\ref{RE22}) must simultaneously satisfy positive definiteness—a fundamental difficulty highlighted in the literature \cite{2015}. The complexity arising from the predictable property induced by Poisson jump processes is also considered and resolved. The
disturbances $v$ affect the output through the system states, which ensures the attenuation of the high-frequency gain in the frequency domain. As a result, the $H_\infty$-norm of the transfer function has no lower bound, and this guarantees the validity of the necessary and sufficient conditions in our conclusion, where the disturbance level $\gamma$ can be set by us. We also propose a numerical method based on backward-in-time integration and provide example verification along with visualization results.

The rest of the paper is organized as follows. Section 2 formulates the problem and introduces key notations, definitions, and preliminary lemmas. Section 3 develops the Stochastic Bounded Real Lemma for mean-field jump-diffusion systems (MF-SJBRL). Section 4 describes the main results. Using classical linear quadratic control results and MF-SJBRL, the feedback representations are obtained through solving four coupled sets of generalized differential Riccati equations (GDREs). Numerical methods are presented in Section 5, followed by the final conclusion in Section 6.

\section{Preliminaries}
\subsection{Notations}
\begin{enumerate}
  \item $\mathbb{R}^{n}$ is the real $n$-dimensional space and $\mathbb{R}^{n \times m}$ is the space of all real $n \times m$ matrices. $I$ is the identity matrix in $\mathbb{R}^{n \times n}$. For a matrix/vector $A, A^{\prime}$ denotes the transpose of $A$, and $|A|$ denotes the square root of the summarized squares of all the components of the matrix/vector A. For square matrix $A, \operatorname{det}(A)$ is the determinant of $A$, and $A^{-1}$ is its inverse if $A$ is nonsingular. $A>0 (A\geqslant 0) /A<0$ means that $A$ is a positive definite (semi-definite) /negative definite symmetric matrix. $\left\langle A_{1}, A_{2}\right\rangle$ denotes the inner product of two vectors $A_{1}$ and $A_{2}$.
  \item $(\Omega, \mathcal{F}, \{\mathcal{F}_{t}\}_{t\geq 0}, P)$ is a given complete filtered probability space, on which a one-dimensional standard Brownian motion $\{W(t)\}_{0\leq t\leq T}$ and a compensated Poisson random measure $\tilde{N}_{p}$ are defined and assumed to be mutually independent.
    $$
    \begin{aligned}
    \mathcal{F}_{t}:=& \sigma[W(s) ; 0 \leqslant s \leqslant t]\vee \\
    &\sigma\left[\iint_{A \times(0, s]} \tilde{{N}_{p}}(d \theta, dr) ; 0 \leqslant s \leqslant t\right], A \in \mathcal{B}(G)
    \end{aligned}
    $$
    is $P$-completed filtration. More specifically, denote by $\mathcal B(\Lambda)$ the Borel $\sigma$-algebra of any topological space $\Lambda.$ Let $(G,\mathcal{B}(G), \nu)$ be a measurable space, $\nu$ is a measure with $\nu(A)<\infty$ for any $A \in \mathcal{B}(G).$ And $p: \Omega\times D_p \longrightarrow G$ is a $\mathcal{F}_t$-adapted stationary Poisson point process with characteristic measure $\nu$, where $D_p$  is a countable subset of $(0, \infty)$. Then the counting measure induced by $p$ is
    $$
    N_p((0,t]\times A):=\#\{s\in D_p; s\leq t, p(s)\in A\},
    $$
    for $t>0, A\in \mathcal{ B }(G)$.
    Let
    $
    \tilde{N}_{p}(d\theta,dt):=N_p(d\theta,dt)-\nu(d\theta)dt
    $ be a compensated Poisson random martingale  measure.
\item $S^n \in \mathbb{R}^{n \times n}$: the collection of $n\times n$ real symmetric matrices.

$S^n_{+} \in \mathbb{R}^{n \times n}$: the set of all non-negative definite matrices of $S^n$.

$L^{\infty}\left([0, T], \mathbb{R}^{n\times n}\right)$: the collection of $\mathbb{R}^{n\times n}$-valued, processes $\eta(t)$ with $||\eta(t)||_\infty:=\operatorname{ess\,sup}_{t\in [0,T]}|\eta(t)|<+\infty.$

$L_{\mathcal{F}_{t}}^2(\Omega;\mathbb{R}^d)$: the collection of $\mathbb{R}^d$-valued, $\mathcal{F}_{t}$-measurable random variables $\eta$
with $$||\eta||^{2}:=\mathbb{E}[|\eta|^2]<+\infty.$$

$L_{\mathcal{F}}^2([0,T];\mathbb{R}^d)$: the collection of $\mathbb{R}^d$-valued, $\mathcal{F}$-adapted
 random processes $\eta(t)$ with $$||\eta(t)||_{[0,T]}^2:=\mathbb{E}\int_{0}^{T}|\eta(t)|^2dt<+\infty.$$

$\mathcal{H}^2_{\mathcal{F}}([0,T];\mathbb{R}^d)$: the space of all $\mathbb{R}^d$-valued, $\mathcal{F}$-adapted $c\grave{a}dl\grave{a}g$ process $\varphi(t)$ on $[0,T]$, such that  $$\mathbb{E}\int_0^T|\varphi(t)|^2dt<+\infty.$$

$\mathcal{S}^2_{\mathcal{F}}([0,T];\mathbb{R}^d)$: the space of all $\mathbb{R}^d$-valued, $\mathcal{F}$-adapted $c\grave{a}dl\grave{a}g$
random process $\varphi(t)$
on $[0,T]$ with $$\mathbb{E}[\sup_{0\leq s\leq T}|\varphi(t)|^2]<+\infty.$$

$M^{\nu,2}\left([0, T]\times G, \mathbb{R}^{n\times n}\right)$: the collection of $\mathbb{R}^{n\times n}$-valued, $r(t,\theta)$ with
$$||r(t,\theta)||_{M^{\nu,2}}^2:=\int_{0}^{T}\int_{G}\|r(t,\theta)\|^2\nu(d\theta)dt<+\infty.$$

$M_{\mathcal{F}}^{\nu,2}\left([0, T]\times G, \mathbb{R}^{n\times n}\right)$: the collection of $\mathbb{R}^{n\times n}$-valued, $\mathcal{F}$-predictable random processes $r(t,\omega,\theta)$ with
$$||r(t,\theta)||_{M_{\mathcal{F}}^{\nu,2}}^2:=\mathbb{E}\int_{0}^{T} \int_{G}\|r(t,\theta)\|^2\nu(d\theta)dt<+\infty.$$

$\mathcal{P}_{n}(\mathbb{R}^{n})$: the collection of probability measures $\nu$ on $\mathbb{R}^{n}$ with finite second order moment, i.e. $$\int|x|^n\nu(dx)<+\infty.$$

$\mathcal{U}([0, T];\mathbb{R}^n)$: the collection of predictable processes $u(t)$, which belongs to $L_{\mathcal{F}}^2([0,T];\mathbb{R}^n)$.
The above notations are adopted in the subsequent analysis.
\end{enumerate}
\subsection{Problem Formulation}
Assuming that $A, \bar{A}, C, \bar{C},A_{11}, \bar{A}_{11},C_{11}, \bar{C}_{11}\in$
$L^{\infty}\left([0, T], \mathbb{R}^{n\times n}\right)$, $B_{2}, \bar{B}_{2}, D_{2}, \bar{D}_{2}\in L^{\infty}\left([0, T], \mathbb{R}^{n\times n_{u}}\right),$
$B_{1}, \bar{B}_{1}, D_{1}, \bar{D}_{1},B_{11}, \bar{B}_{11},D_{11}, \bar{D}_{11} \in$ $L^{\infty}\left([0, T], \mathbb{R}^{n\times n_{v}}\right),$
$E,\bar{E},E_{11},\bar{E}_{11} \in M^{\nu,2}\left([0, T]\times G,\mathbb{R}^{n\times n}\right),$
$F_{2},\bar{F}_{2}$$\in$ $M^{\nu,2}\left([0, T]\times G, \mathbb{R}^{n\times n_{u}}\right),$
$ F_{1},\bar{F}_{1}, F_{11},\bar{F}_{11}\in$ $M^{\nu,2}\left([0, T]\times G, \mathbb{R}^{n\times n_{v}}\right).$ Above standard assumptions will be in force throughout this paper.

\begin{definition}\label{def}(see \cite{2004}) For $0<T<\infty$, by Lemma \ref{lemmaxstatesolution}, when $\left(u, v, x_{0}\right) \in \mathcal{U}([0, T];\mathbb{R}^{n_{u}}) \times \mathcal{U}([0, T];\mathbb{R}^{n_{v}}) \times$ $\mathbb{R}^{n}$, there exists a unique solution $x(t):x\left(t, u, v, x_{0}\right) \in$ $\mathcal{H}_{\mathcal{F}}^{2}\left([0, T]; \mathbb{R}^{n}\right)$. The finite horizon stochastic $H_{2} / H_{\infty}$ control problem  of (\ref{xstateequation}) can be stated as follows.

Given disturbance attenuation $\gamma>0$, to find $u^{*}(t, x) \in \mathcal{U}\left([0,T]; \mathbb{R}^{n_{u}}\right)$, such that

(1)
\begin{equation}\label{hinf}
\begin{aligned}
\|\mathcal{L}_{u^{*}}\|_{[0,T]}&=\sup _{\substack{v \in \mathcal{U}\left([0,T]; \mathbb{R}^{n_{v}}\right) \\
v \neq 0, x_{0}=0}} \frac{\|z\|_{[0,T]}}{\|v\|_{[0,T]}} \\
&:=\sup _{\substack{v \in \mathcal{U}\left([0,T]; \mathbb{R}^{n_{v}}\right) \\
v \neq 0, x_{0}=0}} \frac{\left\{\mathbb{E}\int_{0}^{T}\left(x^{\prime} M^{\prime} M x+u^{* \prime} u^{*}\right) d t\right\}^{1 / 2}}{\left\{\mathbb{E} \int_{0}^{T} v^{\prime} v d t\right\}^{1 / 2}} \\
&<\gamma,
\end{aligned}
\end{equation}
where
\begin{equation*}
\mathcal{L}_{u^{*}}(v)=\left[\begin{array}{c}
M x\left(t, u^{*}, v, 0\right) \\
N u^{*}
\end{array}\right]
\end{equation*}
is called the perturbation operator of (\ref{xstateequation}).

(2) When the worst-case disturbance $v^{*}(t, x) \in$ $\mathcal{U}([0, T];\mathbb{R}^{n_{v}})$, if it exists, is applied to system (\ref{xstateequation}), $u^{*}(t, x)$ minimizes the output energy
      \begin{equation}\label{j2}
      J_{2}\left(u, v^{*};0,x_{0}\right)=\|z\|_{2}^{2}=\mathbb{E} \int_{0}^{T}\left(x^{\prime} M^{\prime} M x+u^{\prime} u\right) dt.\end{equation}
Here, $v^{*}(t, x)$ is called a worst-case disturbance in the sense that
    \begin{equation}\label{j1}
    \begin{aligned}
    &v^{*}(t, x) =\mathop{\arg\min}\limits_{v \in \mathcal{U}\left([0,T]; \mathbb{R}^{n_{v}}\right)} J_{1}\left(u^{*}, v;0,x_{0}\right) ,\forall x_{0} \in L_{\mathcal{F}_{0}}^{2}\left(\Omega; \mathbb{R}^{n}\right), \\
    &J_{1}\left(u^{*}, v;0,x_{0}\right)=\mathbb{E} \int_{0}^{T}\left(\gamma^{2} v^{\prime} v-z^{\prime} z\right) d t .
    \end{aligned}
    \end{equation}
If the previous predictable progresses $\left(u^{*}, v^{*}\right)$ exists, then we say that the finite horizon $H_{2} / H_{\infty}$ control admits a pair of solutions.

\end{definition}
\begin{remark}(see \cite{2004})
If the finite horizon stochastic $H_{2} / H_{\infty}$ control problem is solvable, then the global Nash equilibrium strategies $\left(u^{*}, v^{*}\right)$ for two-player, nonzero-sum game satisfying
\begin{equation}\label{nash1}
J_{1}\left(u^{*}, v^{*}\right) \leq J_{1}\left(u^{*}, v\right)
\end{equation}
and
\begin{equation}\label{nash2}
J_{2}\left(u^{*}, v^{*}\right) \leq J_{2}\left(u, v^{*}\right).
\end{equation}
 To guarantee the uniqueness of the global Nash equilibrium in (\ref{nash1}) and (\ref{nash2}), both players are restricted to using linear, memoryless state feedback control.
\end{remark}
\subsection{Three Useful Lemmas}
Now we recall three lemmas for SDE driven by the Brownian motion and Poisson random jump.
\begin{lemma}\label{lemmaxstatesolution}(see \cite{1992} and \cite{2019})
Assume that $b:[0, T] \times \mathbb{R}^{n} \times \mathcal{P}_{2}(\mathbb{R}^{n})\rightarrow \mathbb{R}^{n}, \sigma:[0, T] \times \mathbb{R}^{n} \times \mathcal{P}_{2}(\mathbb{R}^{n}) \rightarrow \mathbb{R}^{n}$, $\pi:[0, T] \times G \times \mathbb{R}^{n} \times \mathcal{P}_{2}(\mathbb{R}^{n}) \rightarrow \mathbb{R}^{n}$ satisfy the following conditions:

  (1)
  $b(\cdot,0,0) \in L_{\mathcal{F}}^{2}\left([0, T] ; \mathbb{R}^{n}\right)$, $\sigma(\cdot,0,0) \in L_{\mathcal{F}}^{2}\left([0, T] ; \mathbb{R}^{n}\right)$, and $\pi(\cdot,\cdot,0,0) \in M_{\mathcal{F}}^{\nu, 2}\left([0, T] \times G ; \mathbb{R}^{n}\right)$;

  (2)
  For all $x, \bar{x} \in \mathbb{R}^{n}$, $\mu_{1}, \mu_{2} \in \mathcal{P}_{2}(\mathbb{R}^{n})$, $t \in [0,T]$, there exists a constant $C_T>0$ such that
\begin{equation*}
\begin{aligned}
&|b(t, x, \mu_{1})-b(t, \bar{x}, \mu_{2})|+|\sigma(t, x, \mu_{1})-\sigma(t, \bar{x}, \mu_{2})|\\
& +\left(\int_{G}|\pi(t, \theta, x, \mu_{1})-\pi(t, \theta, \bar{x}, \mu_{2})|^{2} \nu(d \theta)\right)^{\frac{1}{2}} \\
\leqslant &C_T(|x-\bar{x}|+\rho(\mu_{1},\mu_{2})),
\end{aligned}
\end{equation*}
where $\rho(\mu_{1},\mu_{2})$ is Wasserstein metric which satisfies
\begin{equation*}
\begin{aligned}
&\rho(\mu_{1},\mu_{2})\\
= &\inf \left\{\int |x-y|r(dx, dy); r \; has \; marginals \; \mu_{1} \; and \; \mu_{2}\right\}\\
= &\sup\left\{\langle g, \mu_{1}\rangle -\langle g, \mu_{2}\rangle; g(x) - g(y)\leq |x-y|\right\}.
\end{aligned}
\end{equation*}
Then the following equation with the Poisson random jumps
\begin{equation*}
\begin{aligned}
&x(t)=x_{0}+\int_{0}^{t} b\left(s, x(s), \mu(s)\right) d s\\
&+\int_{0}^{t} \sigma\left(s, x(s), \mu(s)\right) d W(s)\\
&+\int_{0}^{t}\int_{G} \pi\left(s, \theta, x(s-), \mu(s)\right) \tilde{N_{p}}(d \theta, d s)
\end{aligned}
\end{equation*}
admits a unique strong solution $x \in \mathcal{S}_{\mathcal{F}}^{2}\left(0, T ; \mathbb{R}^{n}\right)$.

Moreover, for system (\ref{SBRLxstate}), the following estimate holds:
\begin{equation}\label{estimate}
\begin{aligned}
\mathbb{E}\left\{\sup _{0 \leqslant t \leqslant T}|x(t)|^{2}\right\} \leqslant K \mathbb{E}\left\{|x(0)|^{2}+\int_{0}^{T}\left|v(t)\right|^{2} d t \right\},
\end{aligned}
\end{equation}
where $K>0$ is a constant relying on the Lipschitz constant $C_T$ and the time horizon $T$.
\end{lemma}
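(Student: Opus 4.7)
The plan is twofold: establish existence and uniqueness via a Picard iteration in $\mathcal{S}^2_{\mathcal{F}}([0,T];\mathbb{R}^n)$, then derive the a priori bound by Itô's formula together with the Burkholder--Davis--Gundy inequality and Gronwall's lemma.

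For the existence and uniqueness part, set $x^{(0)}(t)\equiv x_0$ and define the Picard iterates
\begin{equation*}
x^{(k+1)}(t)=x_0+\int_0^{t} b(s,x^{(k)}(s),\mu^{(k)}(s))\,ds+\int_0^{t}\sigma(s,x^{(k)}(s),\mu^{(k)}(s))\,dW(s)+\int_0^{t}\!\!\int_G\pi(s,\theta,x^{(k)}(s-),\mu^{(k)}(s))\,\tilde{N}_p(d\theta,ds),
\end{equation*}
with $\mu^{(k)}(s)=\mathcal{L}(x^{(k)}(s))$. An induction using BDG on the Brownian integral, the Poisson isometry $\mathbb{E}|\int_0^t\!\int_G\pi\,\tilde N_p|^2=\mathbb{E}\int_0^t\!\int_G|\pi|^2\nu(d\theta)ds$, and the Lipschitz hypothesis (whose mean-field part is controlled via the Kantorovich bound $\rho(\mathcal{L}(X),\mathcal{L}(Y))\le(\mathbb{E}|X-Y|^2)^{1/2}$) keeps every iterate in $\mathcal{S}^2_{\mathcal{F}}$. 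Subtracting successive iterates then yields
\begin{equation*}
\mathbb{E}\sup_{s\le t}|x^{(k+1)}(s)-x^{(k)}(s)|^2\le K\int_0^{t}\mathbb{E}\sup_{r\le s}|x^{(k)}(r)-x^{(k-1)}(r)|^2\,ds,
\end{equation*}
whose iteration produces geometric decay at rate $(Kt)^k/k!$; hence $\{x^{(k)}\}$ is Cauchy in $\mathcal{S}^2_{\mathcal{F}}$ and its limit provides a strong solution. Uniqueness follows from the same Gronwall argument applied to the difference of any two candidate solutions.

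For the a priori bound, apply Itô's formula to $|x(t)|^2$ along the given linear jump-diffusion system. Taking expectation removes the $dW$ and $\tilde{N}_p$ local-martingale contributions, leaving drift, quadratic-variation, and jump-compensator terms that are each bounded by $c_1|x|^2+c_2|v|^2$, using $L^\infty$-boundedness of the matrix coefficients together with the $M^{\nu,2}$-integrability of the jump kernels; Young's inequality handles the cross term involving $\langle x,v\rangle$. Gronwall's lemma then yields $\sup_{t\in[0,T]}\mathbb{E}|x(t)|^2\le K\,\mathbb{E}\{|x_0|^2+\int_0^T|v(t)|^2 dt\}$, and a final BDG estimate on the Brownian and Poisson martingale pieces upgrades this to the desired $\mathbb{E}\sup_{t\le T}|x(t)|^2$ bound after reabsorbing a sufficiently small multiple of the sup on the left.

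The main obstacle is coupling the Wasserstein dependence to the predictable jump integral while keeping the iteration constants uniform in $k$: one must verify that $\mu^{(k)}$ can be treated as a non-anticipating input so that the BDG and Poisson isometries apply without enlarging the filtration, and that the Lipschitz constant $C_t$ composes with the $\nu$-norm of the jump kernel in a form compatible with Gronwall. Once this bookkeeping is in place, the proof is a natural extension of the classical Picard scheme for Itô SDEs to the mean-field jump-diffusion setting, as anticipated by the cited works.
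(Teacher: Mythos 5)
Your proposal is correct and follows the standard route: the paper itself does not prove this lemma but simply quotes it from the cited references, and the Picard-iteration scheme in $\mathcal{S}^2_{\mathcal{F}}$ (It\^o isometry for the compensated Poisson integral, BDG for the Brownian part, Wasserstein distance dominated by $(\mathbb{E}|X-Y|^2)^{1/2}$, Gronwall for uniqueness), followed by It\^o plus BDG plus Gronwall for the estimate (\ref{estimate}), is exactly the argument those references employ. The only remark worth making is that the ``main obstacle'' you flag is not one: $\mu^{(k)}(s)$ is the (deterministic) law of $x^{(k)}(s)$, hence a non-random time-dependent input, so no adaptedness or filtration-enlargement issue arises in applying the isometry and BDG bounds, and the constants stay uniform in $k$ provided $C_t$ is bounded on $[0,T]$ (as the paper implicitly assumes).
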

\begin{lemma}\label{lemmasemiito}
 (Generalized \mbox{It\^o} formula)
 Let $x(t)$ satisfy
\begin{equation*}
\begin{aligned}
d x(t)=&b\left(t, x(t)\right) d t+\sigma\left(t, x(t)\right) d W(t)\\
&+\int_{G} c\left(t, \theta, x(t-)\right) \tilde{N_{p}}(d \theta, d t),
\end{aligned}
\end{equation*}
and $\phi(\cdot, \cdot) \in C^{1,2}\left([0, T] \times \mathbb{R}^{n}\right)$. Then
\begin{equation}\label{semiito}
\begin{aligned}
&d\phi\left(t, x(t)\right)=\phi_{t}\left(t, x\right) d t+\left\langle\phi_{x}\left(t, x\right), b\left(t, x\right)\right\rangle dt+\\
&\left\langle\phi_{x}\left(t, x\right), \sigma\left(t, x\right)\right\rangle d W(t)+\frac{1}{2} \operatorname{tr}\left\{\sigma^{\prime}(t, x) \phi_{x x}(t, x) \sigma(t, x)\right\} dt\\
&+\int_{G}\left[\phi(t, x+c(t, \theta, x))-\phi(t, x)-\right.\\
&\left.\left\langle\phi_{x}(t, x), c(t, \theta, x)\right\rangle\right] \nu(d \theta)dt\\
&+\int_{G}\left[\phi\left(t, x(t-)+c\left(t, \theta, x(t-)\right)\right)-\phi\left(t, x(t-)\right)\right] \tilde{N_{p}}(d \theta, d t),
\end{aligned}
\end{equation}
and $\phi_{t}$ and $\phi_{x}$ denote the partial derivatives of $\phi$ with respect to $t$ and $x$ respectively, and $\phi_{x x}$ denotes the second-order partial derivative of $\phi$ with respect to $x$.
\end{lemma}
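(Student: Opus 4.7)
\medskip

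\noindent\textbf{Proof proposal.} The plan is to establish \eqref{semiito} by reducing it to the classical It\^{o} formula for continuous semimartingales, then handling the jump part through the standard decomposition $N_{p}=\tilde{N}_{p}+\nu(d\theta)dt$. First I would work under a localization: introduce stopping times $\tau_{n}=\inf\{t\geq 0:|x(t)|\geq n\}\wedge T$, so that on $[0,\tau_{n}]$ the process $x$ is bounded and $\phi$, $\phi_{t}$, $\phi_{x}$, $\phi_{xx}$ are bounded on the relevant compact set, ensuring all stochastic integrals below are genuine martingales and the compensator integrals are finite. The final identity then follows by sending $n\to\infty$ using the integrability hypotheses in Lemma \ref{lemmaxstatesolution}.

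The main decomposition proceeds as follows. The c\`adl\`ag process $x(t)$ is a semimartingale whose jump times coincide with the atoms of $p$. Between consecutive jump times, $x$ is a continuous It\^{o} process driven by $b\,dt$ and $\sigma\,dW$, so on each such interval I can apply the classical It\^{o} formula to obtain $\phi_{t}\,dt$, $\langle\phi_{x},b\rangle\,dt$, $\langle\phi_{x},\sigma\rangle\,dW$, and $\tfrac{1}{2}\operatorname{tr}(\sigma'\phi_{xx}\sigma)\,dt$. At each jump time $s$ with $p(s)=\theta$, the process experiences an increment $\Delta x(s)=c(s,\theta,x(s-))$, contributing a pure jump
\begin{equation*}
\phi(s,x(s-)+c(s,\theta,x(s-)))-\phi(s,x(s-)).
\end{equation*}
Summing over jumps on $[0,t]$ is the same as integrating with respect to the counting measure $N_{p}(d\theta,ds)$.

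Next I would perform the compensation step, which is the key algebraic manoeuvre. Writing
\begin{equation*}
\int_{0}^{t}\!\!\int_{G}\bigl[\phi(s,x(s-)+c)-\phi(s,x(s-))\bigr]\,N_{p}(d\theta,ds)
\end{equation*}
as the sum of a $\tilde{N}_{p}$-integral plus the $\nu(d\theta)ds$-compensator, and then adding and subtracting $\langle\phi_{x}(s,x(s-)),c(s,\theta,x(s-))\rangle$ inside the compensator, gives exactly the two jump-related lines of \eqref{semiito}: a predictable compensator term that subtracts the first-order increment (which is what makes the remaining integrand integrable against $\nu$ by a Taylor-type bound $|\phi(x+c)-\phi(x)-\langle\phi_{x},c\rangle|\leq \tfrac{1}{2}\|\phi_{xx}\|_{\infty}|c|^{2}$) and a genuine $\tilde{N}_{p}$-martingale integral. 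Observe that the indistinguishability of $x(s-)$ and $x(s)$ outside a countable set of times means the $ds$-integrals can be written with $x(s)$ as in the statement.

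The main obstacle I foresee is the integrability of the compensator term $\int_{G}[\phi(x+c)-\phi(x)-\langle\phi_{x},c\rangle]\,\nu(d\theta)$: although pointwise the integrand is $O(|c|^{2})$, one needs the second-moment hypothesis $c\in M_{\mathcal{F}}^{\nu,2}$ together with the localization to bound it uniformly. Once this bound is in place together with the standard martingale property of $\int_{0}^{t\wedge\tau_{n}}\langle\phi_{x},\sigma\rangle\,dW$ and $\int_{0}^{t\wedge\tau_{n}}\!\!\int_{G}[\phi(x+c)-\phi(x)]\,\tilde{N}_{p}$, passing $n\to\infty$ via dominated convergence (using the a.s.\ continuity of these integrals in the upper limit and $\tau_{n}\uparrow T$) yields \eqref{semiito}. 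The result is then the standard generalized It\^{o} formula for jump-diffusions as recorded for instance in the references \cite{1992} cited earlier.
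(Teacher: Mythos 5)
The paper does not actually prove this lemma: it is recorded as a standard, known result (the generalized It\^{o} formula for jump diffusions, as in the references cited alongside Lemma \ref{lemmaxstatesolution}), so there is no in-paper argument to compare against. Your sketch is the standard textbook route --- localization, classical It\^{o} between jump times, summation over jumps as an $N_{p}$-integral, then compensation $N_{p}=\tilde{N}_{p}+\nu(d\theta)dt$ with the first-order term added and subtracted --- and in outline it is sound. Two points deserve tightening. First, the interlacing step ``between consecutive jump times $x$ is a continuous It\^{o} process'' presupposes finite jump activity; this is legitimate here because the paper assumes $\nu(A)<\infty$ for every $A\in\mathcal{B}(G)$, hence $\nu(G)<\infty$ and the point process has a.s.\ finitely many jumps on $[0,T]$, but you should say so explicitly, since for a general $\sigma$-finite intensity the proof needs an extra truncation of small jumps. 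Second, your Taylor bound $|\phi(x+c)-\phi(x)-\langle\phi_{x},c\rangle|\leq\tfrac{1}{2}\|\phi_{xx}\|_{\infty}|c|^{2}$ uses a sup of $\phi_{xx}$ over the segment from $x$ to $x+c$; stopping when $|x|\geq n$ bounds $x(t-)$ but not $x(t-)+c(t,\theta,x(t-))$, so this norm is not controlled by your localization alone. You need either to localize the jump sizes as well (truncate $\{|c|\leq m\}$ and pass to the limit), or to invoke a growth condition on $\phi_{xx}$ --- in the paper's actual use $\phi$ is quadratic, so $\phi_{xx}$ is constant and the bound is immediate, and the $L^{1}(\nu)$-integrability of the first-order piece follows from $c\in M_{\mathcal{F}}^{\nu,2}$ together with $\nu(G)<\infty$ via Cauchy--Schwarz. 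With those caveats addressed, your argument gives the stated formula.
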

\begin{lemma}\label{lemmaPpositive}
Consider the following differential equation:T
\begin{equation*}
\left\{
\begin{aligned}
&\dot{P}+P \tilde{A}+\tilde{A}^{\prime} P+\tilde{C}^{\prime} P \tilde{C}+\tilde{Q}\\
&+\int_{G}\left(\tilde{E}^{\prime}(\theta)P \tilde{E}(\theta)\right)\nu(d\theta)=0 ,\\
&P(T)=\tilde{G}, \quad t \in[0, T],
\end{aligned}
\right.
\end{equation*}
where $\tilde{A}, \tilde{C} \in L^{\infty}\left(0, T ; \mathbb{R}^{n \times n}\right)$, $\tilde{E} \in M^{\nu,2}\left([0, T]\times G, \mathbb{R}^{n\times n}\right)$, $\tilde{G} \in S_{+}^{n}, \tilde{Q} \in L^{\infty}\left(0, T ; S_{+}^{n}\right)$. Then, the equation admits a unique solution $P \in C\left([0, T] ; S_{+}^{n}\right)$.
\end{lemma}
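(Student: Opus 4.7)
The plan is to separate the proof into three tasks: well-posedness, symmetry, and positive semi-definiteness, with the third being the substantive step. Since the equation is linear in $P$ (not genuinely a Riccati equation), existence and uniqueness reduce to a finite-dimensional linear ODE, after which the sign property is read off from a Feynman--Kac representation built on the generalized It\^o formula of Lemma \ref{lemmasemiito}.

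For existence and uniqueness, I would rewrite the equation as a linear matrix ODE on the finite-dimensional space $S^{n}$. Define, for each $t\in[0,T]$, the bounded linear operator $\mathcal{L}_{t}:S^{n}\to S^{n}$ by
\begin{equation*}
\mathcal{L}_{t}(X)=X\tilde{A}(t)+\tilde{A}(t)'X+\tilde{C}(t)'X\tilde{C}(t)+\int_{G}\tilde{E}(t,\theta)'X\tilde{E}(t,\theta)\,\nu(d\theta).
\end{equation*}
The operator norm is controlled by $2|\tilde{A}(t)|+|\tilde{C}(t)|^{2}+\int_{G}|\tilde{E}(t,\theta)|^{2}\nu(d\theta)$, which belongs to $L^{1}([0,T])$ in view of $\tilde{A},\tilde{C}\in L^{\infty}$ and $\tilde{E}\in M^{\nu,2}$, and $\tilde{Q}\in L^{1}$ by assumption. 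Reversing time via $\Pi(\tau)=P(T-\tau)$ converts the backward terminal-value problem into a forward Carath\'eodory initial-value problem $\dot{\Pi}=\mathcal{L}_{T-\tau}(\Pi)+\tilde{Q}(T-\tau)$, $\Pi(0)=\tilde{G}$. Standard linear ODE theory yields a unique absolutely continuous $\Pi$ on $[0,T]$, hence $P\in C([0,T];S^{n})$. Symmetry follows by transposing the equation: since $\tilde{Q},\tilde{G}$ are symmetric and the sandwich terms $\tilde{C}'P\tilde{C}$ and $\tilde{E}'P\tilde{E}$ are self-transposed in $P$, $P'$ solves the same terminal-value problem, so uniqueness forces $P=P'$.

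For positive semi-definiteness I would build a stochastic representation. Fix $t\in[0,T]$ and a deterministic $\xi\in\mathbb{R}^{n}$, and introduce the homogeneous linear SDE with Poisson jumps
\begin{equation*}
\begin{aligned}
dy(s)&=\tilde{A}(s)y(s)\,ds+\tilde{C}(s)y(s)\,dW(s)\\
&\quad +\int_{G}\tilde{E}(s,\theta)y(s-)\,\tilde{N}_{p}(d\theta,ds),\quad s\in[t,T],
\end{aligned}
\end{equation*}
with $y(t)=\xi$, whose unique $\mathcal{S}^{2}_{\mathcal{F}}$-solution is provided by Lemma \ref{lemmaxstatesolution}. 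Applying Lemma \ref{lemmasemiito} to $\phi(s,y)=y'P(s)y$ and using the jump-compensator identity $(y+\tilde{E}y)'P(y+\tilde{E}y)-y'Py-2y'P\tilde{E}y=y'\tilde{E}'P\tilde{E}y$, the Lebesgue drift of $y'P(s)y$ equals $y'\big[\dot{P}+\tilde{A}'P+P\tilde{A}+\tilde{C}'P\tilde{C}+\int_{G}\tilde{E}'P\tilde{E}\,\nu(d\theta)\big]y=-y'\tilde{Q}y$. Hence $d(y'Py)=-y'\tilde{Q}y\,ds+dM_{s}$, where $M$ collects the $dW$ and $\tilde{N}_{p}$ stochastic integrals. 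Localizing by $\tau_{n}=\inf\{s\ge t:|y(s)|\ge n\}\wedge T$, taking expectations on $[t,\tau_{n}]$, and passing $n\to\infty$ via the $\mathcal{S}^{2}$-bound of Lemma \ref{lemmaxstatesolution} together with dominated convergence yields
\begin{equation*}
\xi'P(t)\xi=\mathbb{E}\Big[y(T)'\tilde{G}y(T)+\int_{t}^{T}y(s)'\tilde{Q}(s)y(s)\,ds\Big]\ge 0,
\end{equation*}
since $\tilde{G},\tilde{Q}(s)\in S^{n}_{+}$. Arbitrariness of $\xi$ gives $P(t)\in S^{n}_{+}$.

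The main technical obstacle is the martingale justification for the compensated Poisson integral produced by It\^o's formula: its integrand $2y(s-)'P(s)\tilde{E}(s,\theta)y(s-)+y(s-)'\tilde{E}'P\tilde{E}y(s-)$ is quadratic in $y$, so direct $M^{\nu,2}_{\mathcal{F}}$-membership would require control of $\mathbb{E}\sup_{s}|y(s)|^{4}$ that is not explicitly afforded by Lemma \ref{lemmaxstatesolution}. The localization through $\tau_{n}$ described above, on which all integrands are bounded, removes any need for higher-moment hypotheses and delivers the representation in the limit, closing the argument.
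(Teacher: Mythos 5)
Your proposal is correct and follows essentially the same route as the paper: existence and uniqueness from linearity of the terminal-value problem, and positive semi-definiteness via the stochastic representation $\xi'P(t)\xi=\mathbb{E}\bigl[y(T)'\tilde{G}y(T)+\int_{t}^{T}y'\tilde{Q}y\,ds\bigr]$ obtained by applying the generalized It\^o formula to $y'P(s)y$ along the homogeneous jump-diffusion. Your added care (the Carath\'eodory/symmetry details and the stopping-time localization justifying that the $dW$ and $\tilde{N}_{p}$ integrals have zero expectation) tightens steps the paper passes over silently, but it is the same argument.
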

\section{Mean-Field Stochastic Jump Bounded Real Lemma}
A finite horizon mean-field stochastic jump bounded real lemma (for short, MF-SJBRL) is obtained in this section, which serves as a crucial tool for analyzing stochastic $H_{2} / H_{\infty}$ control. For stochastic system
\begin{equation}\label{SBRLxstate}
\left\{
\begin{aligned}
&dx(t)=\left\{A_{11}(t)x(t)+\bar{A}_{11}(t)\mathbb{E}[x(t)]+B_{11}(t)v(t)+\right.\\
&\left.\bar{B}_{11}(t)\mathbb{E}[v(t)]\right\}dt +\left\{C_{11}(t)x(t)+\bar{C}_{11}(t)\mathbb{E}[x(t)]+\right.\\
&\left.D_{11}(t)v(t)+\bar{D}_{11}(t)\mathbb{E}[v(t)]\right\}dW(t)+\int_G\left\{E_{11}(t,\theta)x(t-)\right.\\
&+\bar{E}_{11}(t,\theta)\mathbb{E}[x(t-)]+F_{11}(t,\theta)v(t)+\\
&\left.\bar{F_{11}}(t,\theta)\mathbb{E}[v(t)]\right\}\tilde{N}_p(d\theta,dt),\\
&x(0)=x_{0} \in\mathbb{R}^{n},\\
&z_{1}(t)= M_{11} x(t), \quad  t\in[0,T], \\
\end{aligned}
\right.
\end{equation}
define the perturbation operator as
\begin{equation}
\begin{aligned}
\|\tilde{\mathcal{L}}\|_{[0,T]}&=\sup _{\substack{v \in \mathcal{U}([0, T];\mathbb{R}^{n_{v}}) \\
v \neq 0, x_{0}=0}} \frac{\|z_{1}\|_{[0,T]}}{\|v\|_{[0,T]}} \\
&:=\sup _{\substack{v \in \mathcal{U}([0, T];\mathbb{R}^{n_{v}}) \\
v \neq 0, x_{0}=0}} \frac{\left\{\mathbb{E} \int_{0}^{T}\left(x^{\prime} M_{11}^{\prime} M_{11} x\right) d t\right\}^{1 / 2}}{\left\{\mathbb{E} \int_{0}^{T} v^{\prime} v d t\right\}^{1 / 2}},
\end{aligned}
\end{equation}
and cost funtional
\begin{equation}
J_{1}\left(0, v;0,x_{0}\right)=\mathbb{E} \int_{0}^{T}\left(\gamma^{2} v^{\prime} v-z_{1}^{\prime} z_{1}\right) d t .
\end{equation}
\begin{lemma}{(MF-SJBRL)}\label{SBJRL}
$\|\tilde{\mathcal{L}}\|_{[0, T]}<\gamma$ for some $\gamma>0$ iff the following differential Riccati equations (DRE) (with the time argument $t$ suppressed)
\begin{equation}\label{3.4}
\begin{aligned}
&\begin{cases}
\mathcal{S}(P)-\mathcal{G}(P)\Sigma_{0}^{-1}(P) \mathcal{G}^{\prime}(P)=0,\\
P(T)=0,\\
\Sigma_{0}(P)>0,
\end{cases}
\end{aligned}
\end{equation}
\begin{equation}\label{3.5}
\begin{aligned}
&\begin{cases}
\tilde{\mathcal{S}}(P,Q) -\tilde{\mathcal{G}}(P,Q)\Sigma_{2}^{-1}(P)\tilde{\mathcal{G}}^{\prime}(P,Q)=0,\\
Q(T)=0,\\
\Sigma_{2}(P)>0.
\end{cases}
\end{aligned}
\end{equation}
have unique solution $P,Q\leq 0$ on $[0, T]$, where
\begin{equation}
\begin{aligned}
&\mathcal{S}(P)=\dot{P}+P A_{11}+A_{11}^{\prime} P+C_{11}^{\prime} P C_{11} \\
&+\int_G\{E_{11}(\theta)^{\prime}P E_{11}(\theta)\}\nu(d\theta) -M_{11}^{\prime} M_{11},\\
&\mathcal{G}(P)=P B_{11}+C_{11}^{\prime} P D_{11} +\int_G\{E_{11}(\theta)^{\prime}P F_{11}(\theta)\}\nu(d\theta),\\
&\Sigma_{0}(P)=\gamma^{2}I+D_{11}^{\prime} P D_{11}+\int_G\{F_{11}(\theta)^{\prime}P F_{11}(\theta)\}\nu(d\theta),\\
&\tilde{\mathcal{S}}(P,Q)=\dot{Q}+Q(A_{11}+\bar{A_{11}})+(A_{11}+\bar{A_{11}})^{\prime} Q\\
&+(C_{11}+\bar{C_{11}})^{\prime} P(C_{11}+\bar{C_{11}})\\
&+\int_G\{(E_{11}+\bar{E_{11}})(\theta)^{\prime}P (E_{11}+\bar{E_{11}})(\theta)\}\nu(d\theta)
-M_{11}^{\prime} M_{11},\\
&\tilde{\mathcal{G}}(P,Q)=Q (B_{11}+\bar{B_{11}})+(C_{11}+\bar{C_{11}})^{\prime} P (D_{11}+\bar{D_{11}}) \\
& +\int_G\{(E_{11}+\bar{E_{11}})(\theta)^{\prime}P (F_{11}+\bar{F_{11}})(\theta)\}\nu(d\theta),\\
&\Sigma_{2}(P)=\gamma^{2}I+(D_{11}+\bar{D_{11}})^{\prime} P(D_{11}+\bar{D_{11}})\\
&+\int_G\{(F_{11}+\bar{F_{11}})(\theta)^{\prime}P (F_{11}+\bar{F_{11}})(\theta)\}\nu(d\theta).
\end{aligned}
\end{equation}
\end{lemma}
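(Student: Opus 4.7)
The plan is to adapt the classical stochastic bounded real lemma argument by first reducing the mean-field dynamics to two decoupled subsystems --- a zero-mean jump-diffusion SDE and a deterministic ODE --- and then performing a Lyapunov completion of squares using the two Riccati equations in sequence. Specifically, I would set $\bar{x}(t):=\mathbb{E}[x(t)]$, $\tilde{x}(t):=x(t)-\bar{x}(t)$, and analogously $\bar v,\tilde v$. Taking expectations in (\ref{SBRLxstate}) kills the martingale parts and yields a linear ODE $\dot{\bar x}=(A_{11}+\bar A_{11})\bar x+(B_{11}+\bar B_{11})\bar v$; subtracting gives a centered SDE for $\tilde x$ driven by $W$ and $\tilde N_p$ whose drift involves $(A_{11},B_{11})$, while its diffusion and jump coefficients can be regrouped as $C_{11}\tilde x+(C_{11}+\bar C_{11})\bar x+D_{11}\tilde v+(D_{11}+\bar D_{11})\bar v$ and analogously for the jumps. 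The running cost then splits as $\mathbb{E}[\gamma^2 v'v-x'M_{11}'M_{11}x]=\mathbb{E}[\gamma^2\tilde v'\tilde v-\tilde x'M_{11}'M_{11}\tilde x]+\gamma^2\bar v'\bar v-\bar x'M_{11}'M_{11}\bar x$, which matches the separation between the two Riccati systems.

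For sufficiency, assuming $(P,Q)\leq 0$ solves (\ref{3.4})--(\ref{3.5}) with $\Sigma_0(P),\Sigma_2(P)>0$, I would introduce the Lyapunov functional $V(t):=\mathbb{E}[\tilde x(t)' P(t)\tilde x(t)]+\bar x(t)' Q(t)\bar x(t)$, apply the generalized It\^o formula (Lemma \ref{lemmasemiito}) to $\tilde x' P\tilde x$, take expectation to annihilate the Brownian and compensated-Poisson martingale parts, and differentiate $\bar x'Q\bar x$ via the ODE for $\bar x$. Since $x_0=0$ and $P(T)=Q(T)=0$ give $V(0)=V(T)=0$, the identity
\begin{equation*}
J_1(0,v;0,0)=\int_0^T\mathbb{E}[\gamma^2 v'v-x'M_{11}'M_{11}x]\,dt+V(0)-V(T)
\end{equation*}
combined with (\ref{3.4})--(\ref{3.5}) lets me complete the squares: the $(\tilde v,\tilde x)$-contribution collapses into $(\tilde v-\tilde v^{*})'\Sigma_0(P)(\tilde v-\tilde v^{*})$ with $\tilde v^{*}=-\Sigma_0^{-1}(P)\mathcal{G}'(P)\tilde x$, and the $(\bar v,\bar x)$-contribution into $(\bar v-\bar v^{*})'\Sigma_2(P)(\bar v-\bar v^{*})$ with $\bar v^{*}=-\Sigma_2^{-1}(P)\tilde{\mathcal{G}}'(P,Q)\bar x$. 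Uniform positive definiteness of $\Sigma_0,\Sigma_2$ on $[0,T]$ then gives $J_1(0,v;0,0)\geq c\|v\|_{[0,T]}^2$ for some $c>0$, hence $\|\tilde{\mathcal L}\|_{[0,T]}<\gamma$.

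For necessity, the strict norm bound makes $v\mapsto J_1(0,v;0,x_0)$ a uniformly convex, coercive quadratic functional on $\mathcal{U}([0,T];\mathbb{R}^{n_v})$, so for each initial pair $(s,\xi)$ the problem $V(s,\xi):=\inf_v \mathbb{E}_s\int_s^T(\gamma^2 v'v-z_1' z_1)\,dt$ is well posed. Because the dynamics and cost are linear-quadratic in $(x,\mathbb{E}[x],v,\mathbb{E}[v])$, a standard mean-field LQ separation argument forces the value to take the form $V(s,\xi)=\mathbb{E}[\tilde\xi' P(s)\tilde\xi]+\bar\xi' Q(s)\bar\xi$ with $P,Q\leq 0$ and $P(T)=Q(T)=0$. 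Substituting this ansatz into the associated dynamic-programming equation and setting the gradients in $\tilde v$ and $\bar v$ to zero recovers exactly the coupled Riccati system (\ref{3.4})--(\ref{3.5}); coercivity in $v$ forces $\Sigma_0(P),\Sigma_2(P)>0$, and uniqueness of $(P,Q)$ follows by a standard DRE comparison argument together with Lemma \ref{lemmaPpositive}.

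The hardest step will be the necessity direction, more precisely two entangled issues: (i) upgrading the strict $L^2$-gain bound to \emph{uniform} positive definiteness of the Gramians $\Sigma_0(P)$ and $\Sigma_2(P)$ on $[0,T]$, which is delicate because $P$ itself appears inside these matrices; and (ii) carrying out a verification theorem for the mean-field jump-diffusion value function that respects the predictable property required for Poisson-integrated controls in $\mathcal{U}([0,T];\mathbb{R}^{n_v})$, so that the candidate feedbacks $\tilde v^{*},\bar v^{*}$ extracted from the Riccati solutions are provably admissible.
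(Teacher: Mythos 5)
Your sufficiency direction follows the same route as the paper (mean/centered decomposition, generalized It\^o formula, completion of squares with the two Riccati equations), but it has one concrete gap at the decisive step: the completed squares give
\begin{equation*}
J_{1}(0,v;0,0)=\mathbb{E}\int_{0}^{T}\bigl(\tilde v-\Phi\tilde x\bigr)'\Sigma_{0}(P)\bigl(\tilde v-\Phi\tilde x\bigr)+\bigl(\bar v-\Psi\bar x\bigr)'\Sigma_{2}(P)\bigl(\bar v-\Psi\bar x\bigr)\,dt,
\end{equation*}
where $\Phi=-\Sigma_{0}^{-1}(P)\mathcal{G}'(P)$, $\Psi=-\Sigma_{2}^{-1}(P)\tilde{\mathcal{G}}'(P,Q)$ are \emph{state feedbacks}, and the state $x$ is itself driven by $v$. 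Positive definiteness of $\Sigma_{0},\Sigma_{2}$ therefore only yields $J_{1}\geq c\,\|v-v^{*}(x)\|^{2}\geq 0$, i.e.\ $\|\tilde{\mathcal{L}}\|_{[0,T]}\leq\gamma$, not the strict inequality you claim. To upgrade this to $J_{1}(0,v;0,0)\geq c\|v\|_{[0,T]}^{2}$ you must show that the affine map $v\mapsto\bigl(\tilde v-\Phi\tilde x,\ \bar v-\Psi\bar x\bigr)$ is a bounded bijection of $L^{2}_{\mathcal{F}}$ with bounded inverse (obtained by solving the closed-loop system); this is exactly the inverse mapping theorem step in the paper's proof, and it is missing from your plan.

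Your necessity direction is a genuinely different route (dynamic programming with a quadratic value-function ansatz) from the paper, which instead \emph{constructs} $(P,Q)$ by a quasi-linearized Picard iteration (\ref{quasi}): the iterates are monotone decreasing by Lemma \ref{lemmaPpositive}, bounded below by $-\mu I$ via Lemmas \ref{lemmalinearj1} and \ref{lemmainitial}, converge by monotone/dominated convergence to a solution of (\ref{3.4})--(\ref{3.5}), with $\Sigma_{0}(P),\Sigma_{2}(P)\geq\delta I$ supplied by Lemma \ref{main} and uniqueness by Gronwall. As written, your route leaves precisely the load-bearing steps unsupplied. First, ``a standard mean-field LQ separation argument'' giving $V(s,\xi)=\mathbb{E}[\tilde\xi'P(s)\tilde\xi]+\bar\xi'Q(s)\bar\xi$ with $P,Q$ continuously differentiable is not routine here: the cost $\gamma^{2}v'v-z_{1}'z_{1}$ is indefinite, the value function is distribution-dependent, and you need both the quadratic structure and enough regularity in $s$ to write differential Riccati equations. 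Second, and more seriously, ``coercivity in $v$ forces $\Sigma_{0}(P),\Sigma_{2}(P)>0$'' is asserted with no mechanism; extracting the feedbacks $\tilde v^{*},\bar v^{*}$ from the HJB already presupposes invertibility of these matrices, so the argument is circular unless you prove positivity independently. The paper does this (Lemma \ref{main}) by needle variations $\tilde v(s)=u_{0}\mathbf{1}_{[t',t'+h]}(s)$ combined with the coercivity bound $J_{1}\geq\delta\|v\|^{2}$ and the Lebesgue differentiation theorem, and the same lemma is what keeps $\Sigma_{0}(P_{n})$ invertible at every Picard step so the construction can proceed; your proposal identifies this as the hardest point but does not resolve it, so the necessity half remains an unproved sketch rather than an alternative proof.
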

For convenience, we denote $\Phi=-\Sigma_{0}(P)^{-1}\mathcal{G}^{\prime}(P), \Psi=-\Sigma_{2}(P)^{-1}\tilde{\mathcal{G}}^{\prime}(P,Q).$
Before proving the MF-SJBRL, we propose some lemmas.
\begin{lemma}\label{lemmalinearj1}
Assume that $\varphi,\psi \in C\left([0, T] ; \mathbb{R}^{n_v \times n}\right)$ and $P^{\gamma, \varphi} ,Q^{\gamma, \varphi, \psi}\in C\left([0, T] ; S^{n}\right)$ satisfy the following linear differential matrix-valued equations
\begin{equation}\label{SBRLlinearRE1}
\begin{cases}
                                \left(\begin{array}{c}
                                   I \\
                                   \varphi
                                 \end{array}\right)^{\prime}\left(
\begin{array}{cc}
  \mathcal{S}(P^{\gamma, \varphi})  & \mathcal{G}(P^{\gamma, \varphi}) \\
  \mathcal{G}^{\prime}(P^{\gamma, \varphi})& \Sigma_{0}(P^{\gamma, \varphi})
\end{array}
\right)
                                 \left(
                                 \begin{array}{c}
                                   I \\
                                   \varphi
                                 \end{array}
                                 \right)=0,\\
                                 P^{\gamma, \varphi}(T)=0,
                                 \end{cases}
\end{equation}
\begin{equation}\label{SBRLlinearRE2}
\begin{cases}
                                 \left(\begin{array}{c}
                                   I \\
                                   \psi
                                 \end{array}\right)^{\prime}\left(
\begin{array}{cc}
  \tilde{\mathcal{S}}(P^{\gamma, \varphi},Q^{\gamma, \varphi, \psi})  & \tilde{\mathcal{G}}(P^{\gamma, \varphi},Q^{\gamma, \varphi, \psi}) \\
  \tilde{\mathcal{G}}^{\prime}(P^{\gamma, \varphi},Q^{\gamma, \varphi, \psi})& \Sigma_{2}(P^{\gamma, \varphi})
\end{array}
\right)
                                 \left(\begin{array}{c}
                                   I \\
                                   \psi
                                 \end{array}\right)\\
                                 =0,\\
                                 Q^{\gamma, \varphi, \psi}(T)=0.
                                 \end{cases}
\end{equation}
Then, for any $(\tau, \xi)\in[0,T]\times L^{2}_{\mathcal{F}_{\tau}}(\Omega; \mathbb{R}^{n})$, $v \in \mathcal{U}([\tau, T];\mathbb{R}^{n_{v}})$, we derive that
\begin{equation}\label{SBRLJ1linear}
\begin{aligned}
& J_{1}\left(0,v+\varphi(x^{\varphi,\psi}-\mathbb{E}x^{\varphi,\psi})+ \psi\mathbb{E}x^{\varphi,\psi}; \tau, \xi\right) \\
=&\mathbb{E}\left\langle(\xi-\mathbb{E}\xi), P_{\tau}^{\gamma, \varphi}(\xi-\mathbb{E}\xi)\right\rangle+\left\langle\mathbb{E}\xi, Q_{\tau}^{\gamma,\varphi,\psi}\mathbb{E}\xi\right\rangle\\
&+\mathbb{E}\int_{\tau}^{T}\left\langle(v(t)-\mathbb{E}v(t)),(\mathcal{G}^{\prime}(P^{\gamma, \varphi})+\Sigma_{0}(P^{\gamma, \varphi})\varphi(t))\cdot\right.\\
&\left.(x^{\varphi,\psi}(t)-\mathbb{E}x^{\varphi,\psi}(t))\right\rangle+\left\langle(\mathcal{G} ^{\prime}(P^{\gamma, \varphi})+\Sigma_{0}(P^{\gamma, \varphi})\varphi(t))\cdot\right.\\
&\left.(x^{\varphi,\psi}(t)-\mathbb{E}x^{\varphi,\psi}(t)),v(t)-\mathbb{E}v(t)\right\rangle\\
&+\left\langle v(t)-\mathbb{E}v(t), \Sigma_{0}(P^{\gamma, \varphi})(v(t)-\mathbb{E}v(t)) \right\rangle \\
&+\left\langle\mathbb{E}v(t),(\tilde{\mathcal{G}}^{\prime}(P^{\gamma, \varphi},Q^{\gamma,\varphi,\psi})+\Sigma_{2}(P^{\gamma, \varphi})\psi(t))\mathbb{E}x^{\varphi,\psi}(t)\right\rangle\\
&+\left\langle(\tilde{\mathcal{G}}^{\prime}(P^{\gamma, \varphi},Q^{\gamma,\varphi,\psi})+\Sigma_{2}(P^{\gamma, \varphi})\psi(t))\mathbb{E}x^{\varphi,\psi}(t),\mathbb{E}v(t)\right\rangle\\
&+\left\langle\mathbb{E}v(t), \Sigma_{2}(P^{\gamma, \varphi})\mathbb{E}v(t)\right\rangle dt,
\end{aligned}
\end{equation}
where $$x^{\varphi,\psi}(t, v(\cdot) ; \tau, \xi)=x\left(t, v+\varphi(x^{\varphi,\psi}-\mathbb{E}x^{\varphi,\psi})+ \psi\mathbb{E}x^{\varphi,\psi} ; \tau, \xi\right)$$ solves (\ref{SBRLxstate}). In particular,
\begin{equation}
\begin{aligned}
&J_{1}\left(0,\varphi(x^{\varphi,\psi}-\mathbb{E}x^{\varphi,\psi})+ \psi\mathbb{E}x^{\varphi,\psi}; \tau, \xi\right)\\
=&\mathbb{E}\left\langle(\xi-\mathbb{E}\xi), P_{\tau}^{\gamma,\varphi}(\xi-\mathbb{E}\xi)\right\rangle+\left\langle\mathbb{E}\xi, Q_{\tau}^{\gamma,\varphi}\mathbb{E}\xi\right\rangle.
\end{aligned}
\end{equation}
\end{lemma}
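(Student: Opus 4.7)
The plan is to execute an It\^o/completion-of-squares argument that splits the state into its fluctuation and its mean, applies Lemma \ref{lemmasemiito} to two quadratic functionals, and then invokes the linear matrix identities (\ref{SBRLlinearRE1}) and (\ref{SBRLlinearRE2}) to annihilate the purely state-quadratic drift contributions against the corresponding parts of the running cost $\gamma^{2}\hat v^{\prime}\hat v-x^{\prime}M_{11}^{\prime}M_{11}x$. Put $\bar{x}(t):=\mathbb{E}[x^{\varphi,\psi}(t)]$, $\tilde{x}(t):=x^{\varphi,\psi}(t)-\bar{x}(t)$, and $\hat v(t):=v(t)+\varphi(t)\tilde x(t)+\psi(t)\bar x(t)$, so $\mathbb{E}\tilde x=0$, $\hat v-\mathbb{E}\hat v=(v-\mathbb{E}v)+\varphi\tilde x$ and $\mathbb{E}\hat v=\mathbb{E}v+\psi\bar x$. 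Taking expectations in (\ref{SBRLxstate}) driven by $\hat v$ gives an ODE for $\bar x$ with system matrix $A_{11}+\bar A_{11}+(B_{11}+\bar B_{11})\psi$ and forcing $(B_{11}+\bar B_{11})\mathbb{E}v$; subtracting this from (\ref{SBRLxstate}) yields an SDE for $\tilde x$ whose drift is $(A_{11}+B_{11}\varphi)\tilde x+B_{11}(v-\mathbb{E}v)$ and whose $dW$-coefficient is $(C_{11}+D_{11}\varphi)\tilde x+\bigl(C_{11}+\bar C_{11}+(D_{11}+\bar D_{11})\psi\bigr)\bar x+D_{11}(v-\mathbb{E}v)+(D_{11}+\bar D_{11})\mathbb{E}v$, with the analogous expression for the jump integrand obtained by replacing $C_{11},D_{11},\bar C_{11},\bar D_{11}$ with $E_{11},F_{11},\bar E_{11},\bar F_{11}$.

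Apply Lemma \ref{lemmasemiito} to $\langle\tilde x(t),P^{\gamma,\varphi}(t)\tilde x(t)\rangle$ and to $\langle\bar x(t),Q^{\gamma,\varphi,\psi}(t)\bar x(t)\rangle$, integrate on $[\tau,T]$, take expectation, and use the terminal conditions $P^{\gamma,\varphi}(T)=Q^{\gamma,\varphi,\psi}(T)=0$ so that the stochastic integrals drop. Because $\mathbb{E}\tilde x=0$ while $\bar x$ and $\mathbb{E}v$ are deterministic, cross terms of shape $\tilde x\bar x^{\prime}$, $\tilde x(\mathbb{E}v)^{\prime}$ and $(v-\mathbb{E}v)\bar x^{\prime}$ disappear under $\mathbb{E}$. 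Expanding $\gamma^{2}\hat v^{\prime}\hat v$ via $\hat v=v+\varphi\tilde x+\psi\bar x$ and $x^{\prime}M_{11}^{\prime}M_{11}x$ via $x=\tilde x+\bar x$, and adding the resulting $J_{1}(0,\hat v;\tau,\xi)$ to the It\^o identity, the pure-$\tilde x$-quadratic drift coefficient produced by It\^o equals $\mathcal{S}(P)+\mathcal{G}(P)\varphi+\varphi^{\prime}\mathcal{G}^{\prime}(P)+\varphi^{\prime}\Sigma_{0}(P)\varphi+M_{11}^{\prime}M_{11}-\gamma^{2}\varphi^{\prime}\varphi$; the first four terms vanish by (\ref{SBRLlinearRE1}) and the residual $M_{11}^{\prime}M_{11}-\gamma^{2}\varphi^{\prime}\varphi$ cancels the $\tilde x$-quadratic part of the running cost. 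The same mechanism applied to $\bar x$, using (\ref{SBRLlinearRE2}), kills the $\bar x$-quadratic part.

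What remains is precisely (\ref{SBRLJ1linear}). The $\tilde x$--$(v-\mathbb{E}v)$ cross contribution from It\^o equals $2\langle v-\mathbb{E}v,(\mathcal{G}^{\prime}(P)+(\Sigma_{0}(P)-\gamma^{2}I)\varphi)\tilde x\rangle$; adding the $2\gamma^{2}\langle v-\mathbb{E}v,\varphi\tilde x\rangle$ produced by expanding $\gamma^{2}\hat v^{\prime}\hat v$ regroups it into $2\langle v-\mathbb{E}v,(\mathcal{G}^{\prime}(P)+\Sigma_{0}(P)\varphi)\tilde x\rangle$, matching the fluctuation cross term in (\ref{SBRLJ1linear}); symmetric reasoning with $\Sigma_{2}(P)-\gamma^{2}I$ yields the mean cross term $2\langle\mathbb{E}v,(\tilde{\mathcal{G}}^{\prime}(P,Q)+\Sigma_{2}(P)\psi)\bar x\rangle$. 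The quadratic-in-$v$ contributions merge as $\gamma^{2}(v-\mathbb{E}v)^{\prime}(v-\mathbb{E}v)+(v-\mathbb{E}v)^{\prime}(\Sigma_{0}(P)-\gamma^{2}I)(v-\mathbb{E}v)=\langle v-\mathbb{E}v,\Sigma_{0}(P)(v-\mathbb{E}v)\rangle$, and likewise $\langle\mathbb{E}v,\Sigma_{2}(P)\mathbb{E}v\rangle$. The corollary for $v\equiv0$ is immediate, since every term inside the integral then vanishes.

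The main obstacle is the combinatorial bookkeeping of the diffusion and jump quadratic variations under the $\tilde x/\bar x$ splitting: the coefficients of $\sigma^{\prime}P\sigma$ and $\int_{G}\pi^{\prime}P\pi\,\nu(d\theta)$ involve $\tilde x$, $\bar x$, $v-\mathbb{E}v$ and $\mathbb{E}v$ simultaneously, so a naive expansion generates a plethora of cross terms. Two structural facts make the cancellation clean. First, $\mathbb{E}\tilde x=0$ eliminates every $\tilde x$--versus--deterministic pair under the outer expectation, leaving only the parallel couplings $\tilde x\leftrightarrow v-\mathbb{E}v$ and $\bar x\leftrightarrow\mathbb{E}v$. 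Second, the Schur-block form of (\ref{SBRLlinearRE1})--(\ref{SBRLlinearRE2}) bundles the $\varphi$ and $\psi$ contributions into precisely the combinations $\mathcal{G}(P)+\Sigma_{0}(P)\varphi$ and $\tilde{\mathcal{G}}(P,Q)+\Sigma_{2}(P)\psi$ that appear in (\ref{SBRLJ1linear}); this alignment between the closed-loop It\^o drift and the Riccati block structure is what forces the identity. Some care is also needed with the predictability of the Poisson integrand so that the compensated-jump integral is a genuine martingale, which is where the hypothesis $v\in\mathcal{U}([\tau,T];\mathbb{R}^{n_{v}})$ enters.
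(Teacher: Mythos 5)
Your proposal is correct and follows essentially the same route as the paper's (very terse) proof: add and subtract the differentials of $(x^{\varphi,\psi}-\mathbb{E}x^{\varphi,\psi})^{\prime}P^{\gamma,\varphi}(x^{\varphi,\psi}-\mathbb{E}x^{\varphi,\psi})$ and $(\mathbb{E}x^{\varphi,\psi})^{\prime}Q^{\gamma,\varphi,\psi}(\mathbb{E}x^{\varphi,\psi})$, apply the generalized It\^o formula of Lemma \ref{lemmasemiito} to the mean/fluctuation split, and cancel the state-quadratic terms via (\ref{SBRLlinearRE1})--(\ref{SBRLlinearRE2}); your expansion of the drift, cross and $v$-quadratic coefficients is the bookkeeping the paper leaves implicit, and it checks out. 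Only one phrasing is slightly off: the stochastic integrals drop because the Brownian and compensated-Poisson integrals are martingales under the expectation (as you note later), while the terminal conditions $P^{\gamma,\varphi}(T)=Q^{\gamma,\varphi,\psi}(T)=0$ serve to eliminate the terminal quadratic terms.
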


\begin{lemma}\label{lemmainitial}
If $\|\tilde{\mathcal{L}}\|<\gamma$, then for any $(\tau, \xi)\in[0,T]\times L^{2}_{\mathcal{F}_{\tau}}(\Omega; \mathbb{R}^{n}), v\in\mathcal{U}([\tau,T];\mathbb{R}^{n_{v}})$, there exists $\mu> 0$ such that $J_{1}\left(0, v; \tau, \xi\right)\geq -\mu \mathbb{E}|\xi|^{2} $.
\end{lemma}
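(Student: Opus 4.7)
The plan is to use linearity of the state equation to split the trajectory into a zero-input part (carrying the initial data $\xi$) and a zero-state part (carrying the disturbance $v$), then apply the operator-norm hypothesis $\|\tilde{\mathcal{L}}\|_{[0,T]}<\gamma$ to dominate the zero-state contribution and the a priori estimate of Lemma \ref{lemmaxstatesolution} to dominate the zero-input contribution.

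First I would write $x=x_{1}+x_{2}$, where $x_{1}$ solves (\ref{SBRLxstate}) on $[\tau,T]$ with $v\equiv 0$ and $x_{1}(\tau)=\xi$, and $x_{2}$ solves (\ref{SBRLxstate}) on $[\tau,T]$ with the given $v$ and $x_{2}(\tau)=0$. Linearity of the state dynamics in $(x,\mathbb{E}[x],v,\mathbb{E}[v])$ (including the jump and mean-field terms) guarantees that this decomposition is valid and that $z_{1}=M_{11}x_{1}+M_{11}x_{2}=:z_{1,1}+z_{1,2}$. To relate the zero-state piece to the operator norm, which is defined on $[0,T]$ with $x_{0}=0$, I would extend $v$ by zero to all of $[0,T]$. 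Uniqueness of the solution to (\ref{SBRLxstate}) then forces $x_{2}\equiv 0$ on $[0,\tau]$, so $x_{2}(\tau)=0$ is automatically matched and
\begin{equation*}
\|z_{1,2}\|_{[\tau,T]}=\|z_{1,2}\|_{[0,T]}\leq\|\tilde{\mathcal{L}}\|_{[0,T]}\,\|v\|_{[0,T]}=\|\tilde{\mathcal{L}}\|_{[0,T]}\,\|v\|_{[\tau,T]}.
\end{equation*}
By hypothesis there exists $\delta>0$ with $\|\tilde{\mathcal{L}}\|_{[0,T]}^{2}\leq\gamma^{2}-\delta$. For the zero-input piece, the estimate (\ref{estimate}) (with $v\equiv 0$) yields $\mathbb{E}\sup_{\tau\leq t\leq T}|x_{1}(t)|^{2}\leq K\mathbb{E}|\xi|^{2}$, hence $\|z_{1,1}\|_{[\tau,T]}^{2}\leq K\|M_{11}\|_{\infty}^{2}T\,\mathbb{E}|\xi|^{2}=:C\mathbb{E}|\xi|^{2}$.

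Expanding
\begin{equation*}
J_{1}(0,v;\tau,\xi)=\gamma^{2}\|v\|^{2}-\|z_{1,1}\|^{2}-2\langle z_{1,1},z_{1,2}\rangle-\|z_{1,2}\|^{2},
\end{equation*}
I would bound the cross term by Young's inequality, $|2\langle z_{1,1},z_{1,2}\rangle|\leq \varepsilon^{-1}\|z_{1,1}\|^{2}+\varepsilon\|z_{1,2}\|^{2}$, for some $\varepsilon>0$ to be chosen. Combining the three bounds gives
\begin{equation*}
J_{1}(0,v;\tau,\xi)\geq \bigl[\gamma^{2}-(1+\varepsilon)(\gamma^{2}-\delta)\bigr]\|v\|^{2}-(1+\varepsilon^{-1})C\,\mathbb{E}|\xi|^{2}.
\end{equation*}
Since $\gamma^{2}/(\gamma^{2}-\delta)>1$, choosing $\varepsilon\in(0,\delta/(\gamma^{2}-\delta))$ makes the coefficient of $\|v\|^{2}$ non-negative, and the conclusion follows with $\mu:=(1+\varepsilon^{-1})C$.

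The only subtle point, and the one I would verify most carefully, is the superposition step in the presence of mean-field and jump terms: because $\mathbb{E}[x_{1}+x_{2}]=\mathbb{E}[x_{1}]+\mathbb{E}[x_{2}]$ and every coefficient enters linearly in $(x,\mathbb{E}[x],v,\mathbb{E}[v])$, the decomposition is legitimate. A minor technical point is the extension-by-zero argument, which relies on $\mathcal{F}$-predictability of the zero extension and on uniqueness from Lemma \ref{lemmaxstatesolution}; both are immediate in the present setting.
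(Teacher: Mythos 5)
Your proposal is correct. It shares the paper's skeleton — the superposition $x(\cdot,v;\tau,\xi)=x(\cdot,0;\tau,\xi)+x(\cdot,v;\tau,0)$, the extension of $v$ by zero to $[0,\tau)$ so that the operator norm $\|\tilde{\mathcal{L}}\|_{[0,T]}$ can be invoked, and the a priori estimate (\ref{estimate}) to control the zero-input part — but the middle of the argument is genuinely different and more elementary. The paper introduces the auxiliary Lyapunov solutions $X$ (of $\mathcal{S}(X)=0$) and $Y$ (of $\tilde{\mathcal{S}}(Y,Y)=0$), applies It\^o's formula to represent $J_{1}(0,v;\tau,\xi)-J_{1}(0,v;\tau,0)$ as quadratic terms in $\xi-\mathbb{E}\xi$ and $\mathbb{E}\xi$ plus state--disturbance cross terms involving $\mathcal{G}(X)$ and $\tilde{\mathcal{G}}(X,Y)$, and then absorbs those cross terms into the $\epsilon^{2}\|v\|^{2}$ margin by completing squares; you instead never leave the output level, expanding $\|z_{1,1}+z_{1,2}\|^{2}$ directly and killing the single cross term $\langle z_{1,1},z_{1,2}\rangle$ with Young's inequality together with $\|z_{1,2}\|^{2}\leq(\gamma^{2}-\delta)\|v\|^{2}$. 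What your route buys is brevity — no auxiliary matrix ODEs, no It\^o computation — and an explicit $\mu=(1+\varepsilon^{-1})C$ that is manifestly uniform in $(\tau,\xi,v)$, which is exactly the uniformity the necessity part of the MF-SJBRL proof later needs; what the paper's route buys is a representation that separates the contributions of $\xi-\mathbb{E}\xi$ and $\mathbb{E}\xi$ through $X$ and $Y$, matching the $(P,Q)$ structure used throughout. Two small points you should make explicit when writing this up: estimate (\ref{estimate}) is stated for initial time $0$, so you should note that the same bound with a constant $K$ uniform in $\tau$ holds for the system started at $(\tau,\xi)$ with $v\equiv 0$ (standard, since $K$ depends only on the Lipschitz data and $T$); and the operator-norm inequality for $z_{1,2}$ holds trivially when $v=0$, which the sup defining $\|\tilde{\mathcal{L}}\|$ formally excludes.
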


\begin{lemma}\label{main}
If $\|\tilde{\mathcal{L}}\|<\gamma$, $\varphi,\psi \in C\left([0, T] ; \mathbb{R}^{n_v \times n}\right)$ and $P^{\gamma, \varphi} ,Q^{\gamma, \varphi,\psi}\in C\left([0, T ]; S^{n}\right)$ are the solutions of linear differential matrix-valued equations (\ref{SBRLlinearRE1})-(\ref{SBRLlinearRE2}). Then for any $\delta > 0$ satisfying $\delta <\gamma^{2}- \|\tilde{\mathcal{L}}\|^{2}$,  $\Sigma_{0}(P^{\gamma, \varphi})\geq \delta I$, $\Sigma_{2}(P^{\gamma, \varphi})\geq \delta I.$
\end{lemma}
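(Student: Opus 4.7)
The plan is to combine the coercivity implied by $\|\tilde{\mathcal{L}}\|<\gamma$ with the quadratic identity in Lemma \ref{lemmalinearj1}, and then isolate $\Sigma_0(P^{\gamma,\varphi})$ and $\Sigma_2(P^{\gamma,\varphi})$ at each interior time $t_0$ by a spike-shaped disturbance argument.

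First, I would translate the operator norm bound into a sub-interval coercive lower bound on $J_1$. Since $\|\tilde{\mathcal{L}}\|_{[0,T]}<\gamma$, for every $u\in\mathcal{U}([0,T];\mathbb{R}^{n_v})$ with $x_0=0$,
\[J_1(0,u;0,0)=\gamma^2\|u\|^2_{[0,T]}-\|z_1\|^2_{[0,T]}\geq(\gamma^2-\|\tilde{\mathcal{L}}\|^2)\|u\|^2_{[0,T]}>\delta\|u\|^2_{[0,T]}.\]
For $u\in\mathcal{U}([\tau,T];\mathbb{R}^{n_v})$, extend by zero on $[0,\tau]$: since $x_0=0$ and $u\equiv 0$ there, uniqueness in Lemma \ref{lemmaxstatesolution} forces $x\equiv 0$ on $[0,\tau]$ (first $\mathbb{E}x\equiv 0$ from the mean ODE, then $x\equiv 0$ from the resulting linear SDE), hence $J_1(0,u;\tau,0)\geq\delta\|u\|^2_{[\tau,T]}$. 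Specializing Lemma \ref{lemmalinearj1} with $\xi=0$ and writing $\tilde v:=v+\varphi(x^{\varphi,\psi}-\mathbb{E}x^{\varphi,\psi})+\psi\mathbb{E}x^{\varphi,\psi}$ for the actual input to (\ref{SBRLxstate}), the identity (\ref{SBRLJ1linear}) gives $J_1(0,\tilde v;\tau,0)=\mathrm{RHS}(v;\varphi,\psi)$, so
\[\mathrm{RHS}(v;\varphi,\psi)\geq\delta\|\tilde v\|^2_{[\tau,T]}.\]

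The final step is a spike-test at an arbitrary interior $t_0\in(0,T)$. For $\Sigma_2(P^{\gamma,\varphi}(t_0))$, choose the deterministic spike $v(t)=v_0\mathbf{1}_{[t_0,t_0+\epsilon]}(t)$ with $v_0\in\mathbb{R}^{n_v}$; since $v=\mathbb{E}v$, the three $v-\mathbb{E}v$ terms in $\mathrm{RHS}$ vanish. Using the a-priori estimate from Lemma \ref{lemmaxstatesolution}, $\mathbb{E}|x^{\varphi,\psi}(t)|^2\leq C(t-t_0)|v_0|^2$ on the active window and $\leq C\epsilon|v_0|^2$ afterwards, so the $v$–$\mathbb{E}x$ cross terms in $\mathrm{RHS}$ contribute $O(\epsilon^{3/2})|v_0|^2$ while the $\Sigma_2$-quadratic contributes $\epsilon\langle v_0,\Sigma_2(P^{\gamma,\varphi}(t_0))v_0\rangle+o(\epsilon)$. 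Discarding the tail, $\|\tilde v\|^2_{[\tau,T]}\geq\|\tilde v\|^2_{[t_0,t_0+\epsilon]}\geq\epsilon|v_0|^2(1-O(\epsilon))$ by Cauchy--Schwarz against the feedback correction. Dividing by $\epsilon$ and letting $\epsilon\downarrow 0$ yields $\langle v_0,\Sigma_2(P^{\gamma,\varphi}(t_0))v_0\rangle\geq\delta|v_0|^2$, so $\Sigma_2(P^{\gamma,\varphi}(t_0))\geq\delta I$. For $\Sigma_0$, repeat with $v(t)=v_0\eta\mathbf{1}_{[t_0,t_0+\epsilon]}(t)$, where $\eta\in L^2_{\mathcal{F}_{t_0}}(\Omega)$ is mean-zero with unit variance (e.g. a normalized Brownian increment on $[0,t_0]$). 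Now $\mathbb{E}v\equiv 0$, the mean ODE gives $\mathbb{E}x^{\varphi,\psi}\equiv 0$, and the surviving $\Sigma_0$-quadratic in $\mathrm{RHS}$ produces the analogous bound $\Sigma_0(P^{\gamma,\varphi}(t_0))\geq\delta I$; the endpoints $t_0\in\{0,T\}$ follow by continuity of $P^{\gamma,\varphi}$.

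The main technical obstacle is the $\epsilon$-asymptotic control of $\|\tilde v\|^2_{[\tau,T]}$: despite the feedback correction $\varphi(x-\mathbb{E}x)+\psi\mathbb{E}x$ present in $\tilde v$, we must show $\|\tilde v\|^2_{[\tau,T]}\geq(1-o(1))\epsilon|v_0|^2$. The key input is the short-time state bound $\mathbb{E}|x^{\varphi,\psi}(t)|^2=O(t-t_0)|v_0|^2$ on the active window, which makes the feedback correction an order of $\sqrt{\epsilon}$ smaller in $L^2$ than the driving spike and simultaneously keeps the quadratic cross terms in $\mathrm{RHS}$ of strictly smaller order than $\epsilon$.
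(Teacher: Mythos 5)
Your proposal is correct and follows essentially the same route as the paper's proof: the coercivity $J_{1}(0,\cdot;0,0)\geq\delta\|\cdot\|^{2}$ drawn from $\|\tilde{\mathcal{L}}\|<\gamma$, the quadratic representation of Lemma \ref{lemmalinearj1} under the feedback $(\varphi,\psi)$, and needle-type disturbances localized at a time $t_{0}$, treated separately for the mean-zero channel (giving $\Sigma_{0}(P^{\gamma,\varphi})\geq\delta I$) and the mean channel (giving $\Sigma_{2}(P^{\gamma,\varphi})\geq\delta I$). The only differences are in execution: you realize the mean-zero spike with an $\mathcal{F}_{t_{0}}$-measurable unit-variance multiplier $\eta$ and control the cross terms and the feedback contamination of $\|\tilde v\|^{2}$ via the a priori estimate of Lemma \ref{lemmaxstatesolution}, whereas the paper uses $\tilde v(t)W(t)$, an explicit fundamental-solution formula for $\mathbb{E}[W(s)x(s)]$ (resp. $\mathbb{E}[x(s)]$), and the Lebesgue differentiation theorem to pass to the limit.
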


\section{Stochastic $H_{2}/H_{\infty}$ control }
\begin{theorem}\label{mainresult}
Finite horizon $H_{2} / H_{\infty}$ control has solution $\left(u^{*}(t, x), v^{*}(t, x)\right)$, where $u^{*}(t, x)$ and $v^{*}(t, x)$ are the following time-variant feedback strategies:
\begin{equation}
\begin{aligned}
&u^{*}(t, x)=K_{2}(t)x(t-)+\tilde{K_{2}}(t)\mathbb{E}(x(t-))\\
&=K_{2}(t)[x(t-)-\mathbb{E}(x(t-))]+(K_{2}(t)+\tilde{K_{2}}(t))\mathbb{E}(x(t-)), \\
&v^{*}(t, x)=K_{1}(t)x(t-)+\tilde{K_{1}}(t)\mathbb{E}(x(t-))\\
&=K_{1}(t)[x(t-)-\mathbb{E}(x(t-))]+(K_{1}(t)+\tilde{K_{1}}(t))\mathbb{E}(x(t-)),
\end{aligned}
\end{equation}
respectively, iff the four sets of coupled Riccati equations
\begin{equation}\label{RE1}
\left\{
\begin{array}{l}
\mathcal{S}_{1}(P_{1})-\mathcal{G}_{1}(P_{1})\Sigma_{0}^{-1}(P_{1})\mathcal{G}_{1}^{\prime}(P_{1})=0,\\
P_{1}(T)=0,\\
\Sigma_{0}(P_{1})>0,
\end{array}
\right.
\end{equation}
\begin{equation}\label{RE11}
\left\{
\begin{array}{l}
\tilde{\mathcal{S}}_{1}(P_{1},Q_{1})-\tilde{\mathcal{G}}_{1}(P_{1},Q_{1})\Sigma_{2}(P_{1})^{-1}\tilde{\mathcal{G}}_{1}^{\prime}(P_{1},Q_{1})=0,\\
Q_{1}(T)=0,\\
\Sigma_{2}(P_{1})>0,
\end{array}
\right.
\end{equation}
\begin{equation}\label{RE2}
\left\{
\begin{array}{l}
\mathcal{S}_{2}(P_{2})-\mathcal{G}_{2}(P_{2})\Sigma_{0}^{-1}(P_{2})\mathcal{G}_{2}^{\prime}(P_{2})=0,\\
P_{2}(T)=0,\\
\tilde{\Sigma}_{0}(P_{2})>0,
\end{array}
\right.
\end{equation}
\begin{equation}\label{RE22}
\left\{
\begin{array}{l}
\tilde{\mathcal{S}}_{2}(P_{2},Q_{2})-\tilde{\mathcal{G}}_{2}(P_{2},Q_{2})\Sigma_{2}^{-1}(P_{2})\tilde{\mathcal{G}}_{2}^{\prime}(P_{2},Q_{2})=0,\\
Q_{2}(T)=0,\\
\tilde{\Sigma}_{2}(P_{2})>0,
\end{array}
\right.
\end{equation}
have the solution $(P_{1},Q_{1};P_{2},Q_{2})$ on $[0, T]$. Furthermore, $P_{1}(t),Q_{1}(t) < 0,$ $P_{2}(t),Q_{2}(t)> 0.$
In this case,
\begin{equation*}
\begin{aligned}
&K_2(t) =
 -\tilde{\Sigma}_{0}^{-1}(P_{2})\mathcal{G}_{2}^{\prime}(P_{2}),\\
 &K_{2}(t)+\tilde{K_{2}}(t)=
 - \tilde{\Sigma}_{2}^{-1}(P_{2})\tilde{\mathcal{G}}_{2}^{\prime}(P_{2},Q_{2}),\\
&K_1(t) =
-\Sigma_{0}^{-1}(P_{1})\mathcal{G}_{1}^{\prime}(P_{1}),\\
&K_{1}(t)+\tilde{K_{1}}(t)=
-\Sigma_{2}^{-1}(P_{1}) \tilde{\mathcal{G}}_{1}^{\prime}(P_{1},Q_{1}),\\
&J_{1}(u^{*},v^{*};0,x_{0})=\mathbb{E}\left\langle x_{0}-\mathbb{E}x_{0}, P_{1}(0)(x_{0}-\mathbb{E}x_{0})\right\rangle\\
&+\left\langle\mathbb{E}x_{0},Q_{1}(0)\mathbb{E}x_{0}\right\rangle,\\
&J_{2}\left(u^{*}, v^{*};0,x_{0}\right)=\mathbb{E}\left\langle x_{0}-\mathbb{E}x_{0}, P_{2}(0)(x_{0}-\mathbb{E}x_{0})\right\rangle\\
&+\left\langle\mathbb{E}x_{0},Q_{2}(0)\mathbb{E}x_{0}\right\rangle,
\end{aligned}
\end{equation*}
\end{theorem}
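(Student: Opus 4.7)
The plan is to prove necessity and sufficiency separately, combining MF-SJBRL (Lemma \ref{SBJRL}) with a mean-field linear-quadratic (LQ) analysis built on the Generalized It\^o formula (Lemma \ref{lemmasemiito}) and classical completion-of-squares. The central device I would use is the \emph{mean-field decomposition} $x(t)=\tilde x(t)+\mathbb{E}x(t)$ with $\tilde x(t):=x(t)-\mathbb{E}x(t)$, together with the matched Lyapunov candidates
\begin{equation*}
V_i(t,x)=\langle \tilde x,\,P_i(t)\tilde x\rangle+\langle \mathbb{E}x,\,Q_i(t)\mathbb{E}x\rangle,\qquad i=1,2.
\end{equation*}
Under this decomposition the $\tilde x$- and $\mathbb{E}x$-dynamics decouple into two linear systems, each producing one Riccati equation per index $i$ ($P_i$ from the deviation dynamics, $Q_i$ from the mean dynamics), which will yield the four equations (\ref{RE1})--(\ref{RE22}).

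For sufficiency, I would assume $(P_1,Q_1;P_2,Q_2)$ solve (\ref{RE1})--(\ref{RE22}) with the asserted sign and definiteness properties, and define $(u^*,v^*)$ by the stated feedback formulas. Fixing $u=u^*$, the closed-loop system becomes an MF-SDE in $v$ of the form (\ref{SBRLxstate}); applying Lemma \ref{lemmasemiito} to $V_1$ and grouping the drift, diffusion and jump terms against the operators $\mathcal{S}_1,\mathcal{G}_1,\tilde{\mathcal{S}}_1,\tilde{\mathcal{G}}_1$ should deliver the completion-of-squares identity
\begin{equation*}
J_1(u^*,v;0,x_0)=V_1(0,x_0)+\mathbb{E}\!\int_0^T\!\bigl[\langle \tilde v-K_1\tilde x,\Sigma_0(P_1)(\tilde v-K_1\tilde x)\rangle+\langle \mathbb{E}v-(K_1+\tilde K_1)\mathbb{E}x,\Sigma_2(P_1)(\mathbb{E}v-(K_1+\tilde K_1)\mathbb{E}x)\rangle\bigr]dt,
\end{equation*}
which, by $\Sigma_0(P_1),\Sigma_2(P_1)>0$, is uniquely minimized at $v=v^*$, giving the Nash inequality (\ref{nash1}) and the stated value of $J_1(u^*,v^*;0,x_0)$. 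Substituting $v=v^*$, the cost $J_2(\cdot,v^*)$ then reduces to a standard mean-field Poisson-jump LQ cost; the analogous It\^o analysis with $V_2$ using strict positivity of $\tilde\Sigma_0(P_2),\tilde\Sigma_2(P_2)$ will yield (\ref{nash2}), the $u^*$ feedback, and the $J_2(u^*,v^*;0,x_0)$ formula. The bound $\|\mathcal{L}_{u^*}\|_{[0,T]}<\gamma$ in (\ref{hinf}) then follows from MF-SJBRL applied to this closed-loop system.

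For necessity, I would suppose $(u^*,v^*)$ solves the $H_2/H_\infty$ problem. Within the memoryless linear feedback class, the bound (\ref{hinf}) combined with Lemma \ref{SBJRL} applied to the system driven by $v$ with $u=u^*$ fixed will produce $(P_1,Q_1)$ solving (\ref{RE1})--(\ref{RE11}) with $\Sigma_0(P_1),\Sigma_2(P_1)>0$ and force the worst-case feedback to take the form $K_1=-\Sigma_0^{-1}(P_1)\mathcal{G}_1^\prime(P_1)$, $K_1+\tilde K_1=-\Sigma_2^{-1}(P_1)\tilde{\mathcal{G}}_1^\prime(P_1,Q_1)$. Substituting $v=v^*$, the subproblem $\min_u J_2(u,v^*)$ becomes a mean-field Poisson-jump LQ problem; the same It\^o/decomposition machinery together with Lemma \ref{lemmaPpositive} should produce $(P_2,Q_2)\geq 0$ and the stated expressions for $K_2,\tilde K_2$, while finiteness of the optimal cost forces $\tilde\Sigma_0(P_2),\tilde\Sigma_2(P_2)>0$. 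The signs $P_1,Q_1\leq 0$ follow from the $-z^\prime z$ component in $J_1$ and the uniform lower bound on the value function under the $H_\infty$ constraint (Lemma \ref{lemmainitial}).

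The hard part will be to manage the intrinsic \emph{coupling} of the four Riccati equations: $(P_1,Q_1)$ is built from the closed-loop dynamics with $u^*$ entering through $(K_2,\tilde K_2)$, hence through $(P_2,Q_2)$, while $(P_2,Q_2)$ reciprocally involves $(K_1,\tilde K_1)$ from $(P_1,Q_1)$, so all four definiteness conditions $\Sigma_0(P_1),\Sigma_2(P_1),\tilde\Sigma_0(P_2),\tilde\Sigma_2(P_2)>0$ must be sustained jointly on $[0,T]$. A further subtlety, already flagged in \cite{2015}, is that the diffusion and jump integrals depend on both $u$ and $v$, so the completion-of-squares on the decomposed system must be orchestrated so that the cross terms in $(C+\bar C),(D+\bar D),(E+\bar E),(F+\bar F)$ collapse exactly into the $\tilde{\mathcal{S}}_i,\tilde{\mathcal{G}}_i,\tilde\Sigma_0(P_2),\tilde\Sigma_2(P_2)$ blocks, with the jump integrands handled through the predictable compensator formula in Lemma \ref{lemmasemiito}.
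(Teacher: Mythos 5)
Your proposal follows essentially the same route as the paper's proof: the mean-field decomposition $x=(x-\mathbb{E}x)+\mathbb{E}x$, completion of squares via the generalized It\^o formula against the pairs $(P_1,Q_1)$ and $(P_2,Q_2)$, the MF-SJBRL both to produce $(P_1,Q_1)$ in the necessity direction and to upgrade $\|\mathcal{L}_{u^*}\|\leq\gamma$ to a strict inequality, and a mean-field jump LQ argument for $(P_2,Q_2)$. The only notable difference is minor: for the solvability of (\ref{RE2})--(\ref{RE22}) under the worst-case disturbance the paper simply invokes an existing mean-field LQ theorem (Theorem IV.1 of \cite{2021}) rather than re-deriving it, and your suggestion that Lemma \ref{lemmaPpositive} (a linear Lyapunov result) would yield existence for these nonlinear Riccati equations is optimistic---you would still need the LQ solvability result or an iteration argument there.
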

where
\begin{equation*}
\begin{aligned}
\mathcal{S}_{1}(P_{1})&=\dot{P_{1}}+P_{1} (A+B_{2}K_{2})+(A+B_{2}K_{2})^{\prime} P_{1}\\
&+(C+D_{2}K_{2})^{\prime}P_{1} (C+D_{2}K_{2})\\
&+\int_G\{(E+F_{2}K_{2})^{\prime}(\theta)P_{1} (E+F_{2}K_{2})(\theta)\}\nu(d\theta)\\
& -M^{\prime} M-K_{2}^{\prime}K_{2},\\
\mathcal{G}_{1}(P_{1})&=P_{1} B_{1}+(C+D_{2}K_{2})^{\prime} P_{1} D_{1} \\
&+\int_G\{(E+F_{2}K_{2})^{\prime}(\theta)P_{1} F_{1}(\theta)\}\nu(d\theta),\\
\Sigma_{0}(P_{1})&=\gamma^{2}I+D_{1}^{\prime} P_{1}D_{1}+\int_G\{F_{1}^{\prime}(\theta)P_{1} F_{1}(\theta)\}\nu(d\theta),\\
\tilde{\mathcal{S}}_{1}(P_{1},Q_{1})&=\dot{Q_{1}}+Q_{1}[A+\bar{A}+(B_{2}+\bar{B_{2}})(K_{2}+\tilde{K_{2}})]\\
&+\{A+\bar{A}+(B_{2}+\bar{B_{2}})(K_{2}+\tilde{K_{2}})\}^{\prime} Q_{1}\\
&+\{C+\bar{C}+(D_{2}+\bar{D_{2}})(K_{2}+\tilde{K_{2}})\}^{\prime} P_{1}\cdot\\
& [C+\bar{C}+(D_{2}+\bar{D_{2}})(K_{2}+\tilde{K_{2}})] \\
& +\int_G\{[E+\bar{E}+(F_{2}+\bar{F_{2}})(K_{2}+\tilde{K_{2}})]^{\prime}(\theta)P_{1}\cdot\\
&\{E+\bar{E}+(F_{2}+\bar{F_{2}})(K_{2}+\tilde{K_{2}})\}(\theta)\}\nu(d\theta)\\
&-M^{\prime}M-K_{2}^{\prime}K_{2}-\tilde{K}_{2}^{\prime}\tilde{K}_{2},
\end{aligned}
\end{equation*}
\begin{equation*}
\begin{aligned}
\tilde{\mathcal{G}}_{1}(P_{1},Q_{1})&=Q_{1}(B_{1}+\bar{B_{1}})+[C+\bar{C}+(D_{2}+\bar{D_{2}})(K_{2}\\
&+\tilde{K_{2}})]P_{1}(D_{1}+\bar{D_{1}})+\int_G[E+\bar{E}+(F_{2}+\bar{F_{2}})(K_{2}\\
&+\tilde{K_{2}})](\theta)P_{1}(F_{1}+\bar{F_{1}})(\theta)\nu(d\theta),\\
\Sigma_{2}(P_{1})&=\gamma^{2}I+(D_{1}+\bar{D_{1}})^{\prime} P_{1}(D_{1}+\bar{D_{1}})\\
&+\int_G\{(F_{1}+\bar{F_{1}})(\theta)^{\prime}P_{1} (F_{1}+\bar{F_{1}})(\theta)\}\nu(d\theta),
\end{aligned}
\end{equation*}
\begin{equation*}
\begin{aligned}
\mathcal{S}_{2}(P_{2})&=\dot{P_{2}}+P_{2} (A+B_{1}K_{1})+(A+B_{1}K_{1})^{\prime} P_{2}\\
&+(C+D_{1}K_{1})^{\prime} P_{2} (C+D_{1}K_{1}) +\int_G\{(E+F_{1}K_{1})^{\prime}\\
&(\theta)P_{2} (E+F_{1}K_{1})(\theta)\}\nu(d\theta) +M^{\prime} M,
\end{aligned}
\end{equation*}
\begin{equation*}
\begin{aligned}
\mathcal{G}_{2}(P_{2})&=P_{2} B_{2}+(C+D_{1}K_{1})^{\prime} P_{2} D_{2} \\
&+\int_G\{(E+F_{1}K_{1})^{\prime}(\theta)P_{2} F_{2}(\theta)\}\nu(d\theta),\\
\tilde{\Sigma}_{0}(P_{2})&=I+D_{2}^{\prime} P_{2}D_{2}+\int_G\{F_{2}^{\prime}(\theta)P_{2} F_{2}(\theta)\}\nu(d\theta),\\ \nonumber
\end{aligned}
\end{equation*}
\begin{equation*}
\begin{aligned}
\tilde{\mathcal{S}}_{2}(P_{2},Q_{2})&=\dot{Q_{2}}+Q_{2}\{A+\bar{A}+(B_{1}+\bar{B_{1}})(K_{1}+\tilde{K_{1}})\}+\\
&\{A+\bar{A}+(B_{1}+\bar{B_{1}})(K_{1}+\tilde{K_{1}})\}^{\prime} Q_{2}\\
&+\{C+\bar{C}+(D_{1}+\bar{D_{1}})(K_{1}+\tilde{K_{1}})\}^{\prime} P_{2}\cdot\\
& \{C+\bar{C}+(D_{1}+\bar{D_{1}})(K_{1}+\tilde{K_{1}})\} \\
&+\int_G \{(E+\bar{E}+(F_{1}+\bar{F_{1}})(K_{1}+\tilde{K_{1}}))^{\prime}(\theta) P_{2}\cdot \\
& (E+\bar{E}+(F_{1}+\bar{F_{1}})(K_{1}+\tilde{K_{1}}))(\theta)\}\nu(d\theta)\\
& +M^{\prime}M,
\end{aligned}
\end{equation*}
\begin{equation*}
\begin{aligned}
\tilde{\mathcal{G}}_{2}(P_{2},Q_{2})&=Q_{2}(B_{2}+\bar{B_{2}})+[C+\bar{C}+(D_{1}\\
&+\bar{D_{1}})(K_{1}+\tilde{K_{1}})]P_{2}(D_{2}+\bar{D_{2}}) \\ &+\int_G\{E+\bar{E}+(F_{1}+\bar{F_{1}})(K_{1}+\tilde{K_{1}})\}(\theta)\cdot\\
&P_{2}(F_{2}+\bar{F_{2}})(\theta)\nu(d\theta),\\
\end{aligned}
\end{equation*}
\begin{equation*}
\begin{aligned}
\tilde{\Sigma}_{2}(P_{2})=&I+(D_{2}+\bar{D_{2}})^{\prime} P_{2}(D_{2}+\bar{D_{2}})+\\
&\int_G\{(F_{2}+\bar{F_{2}})(\theta)^{\prime}P_{2} (F_{2}+\bar{F_{2}})(\theta)\}\nu(d\theta).
\end{aligned}
\end{equation*}

\begin{proof}
Sufficiency:
From the state equation, we have
 \begin{equation}
\begin{aligned}
&\begin{cases}
d\mathbb{E}[x(t)]=\{(A(t)+\bar{A}(t))\mathbb{E}[x(t)]
+(B_{2}(t)+\bar{B_{2}}(t))\\
\mathbb{E}[u(t)]+(B_{1}(t)+\bar{B_{1}}(t))\mathbb{E}[v(t)]\}dt, \\
\mathbb{E}[x(0)]=\mathbb{E}[x_{0}],
\end{cases}\\
&\begin{cases}
dx(t)-\mathbb{E}[x(t)]=
\{A(t)(x(t)-\mathbb{E}[x(t)])+B_{2}(t)(u(t)-\\\mathbb{E}[u(t)])+
B_{1}(t)(v(t)-\mathbb{E}[v(t)])\}dt\\
+\{C(t)x(t)+\bar{C}(t)\mathbb{E}[x(t)]+D_{2}(t)u(t)+\bar{D_{2}}(t)\mathbb{E}[u(t)]\\
+D_{1}(t)v(t)+\bar{D_{1}}(t)\mathbb{E}[v(t)]\}dW(t)
+\int_G\{E(t,\theta)x(t-)\\
+\bar{E}(t,\theta)\mathbb{E}[x(t-)]+F_{2}(t,\theta)u(t)+\bar{F_{2}}(t,\theta)\mathbb{E}[u(t)]+\\
F_{1}(t,\theta)v(t)+\bar{F_{1}}(t,\theta)\mathbb{E}[v(t)]\}\tilde{N}_p(d\theta,dt),\\
x(0)-\mathbb{E}[x(0)]=x_{0}-\mathbb{E}[x_{0}].
\end{cases}
\end{aligned}
\end{equation}
Then by \mbox{It\^o} formulation and DRE (\ref{RE1}), it follows that (where $x(t-)$ is abbreviated as $x$)
\begin{equation*}
\begin{aligned}
&J_{1}\left(K_{2}x+\tilde{K_{2}}\mathbb{E}(x), v;0,x_{0}\right)\\
=&\mathbb{E} \int_{0}^{T}\left(\gamma^{2} v^{\prime} v-x^{\prime}M^{\prime}Mx-(u^{*})^{\prime}(u^{*})\right) d t\\
&+\mathbb{E} \int_{0}^{T}d((x-\mathbb{E}x)^{\prime}P_{1}(x-\mathbb{E}x))+\mathbb{E} \int_{0}^{T}d((\mathbb{E}x)^{\prime}Q_{1}(\mathbb{E}x))\\
&+\mathbb{E}\left\langle x_{0}-\mathbb{E}x_{0}, P_{1}(0)(x_{0}-\mathbb{E}x_{0})\right\rangle+\left\langle\mathbb{E}x_{0}, Q_{1}(0)\mathbb{E}x_{0}\right\rangle\\
=&\mathbb{E}\left\langle x_{0}-\mathbb{E}x_{0}, P_{1}(0)(x_{0}-\mathbb{E}x_{0})\right\rangle+\left\langle\mathbb{E}x_{0},Q_{1}(0)\mathbb{E}x_{0}\right\rangle\\
&+\mathbb{E} \int_{0}^{T}\bigg\{
\{(v-\mathbb{E}v)+\Sigma_{0}^{-1}(P_{1})\mathcal{G}_{1}^{\prime}(P_{1})(x-\mathbb{E}x)\}^{\prime}\Sigma_{0}(P_{1})\cdot\\
&\{(v-\mathbb{E}v)+\Sigma_{0}^{-1}(P_{1})\mathcal{G}_{1}^{\prime}(P_{1})(x-\mathbb{E}x)\} \\
&+\{(\mathbb{E}v)+\Sigma_{2}^{-1}(P_{1})\tilde{\mathcal{G}}_{1}^{\prime}(P_{1},Q_{1})(\mathbb{E}x)\}^{\prime}\Sigma_{2}(P_{1})\{(\mathbb{E}v)\\
&+\Sigma_{2}^{-1}(P_{1})\tilde{\mathcal{G}}_{1}^{\prime}(P_{1},Q_{1})(\mathbb{E}x)\}  \bigg\}dt.
\end{aligned}
\end{equation*}
It is obvious that $v-\mathbb{E}v=-\Sigma_{0}^{-1}(P_{1})\mathcal{G}_{1}^{\prime}(P_{1})(x-\mathbb{E}x)$ and $\mathbb{E}v=-\Sigma_{2}^{-1}(P_{1})\tilde{\mathcal{G}}_{1}^{\prime}(P_{1},Q_{1})(\mathbb{E}x)$ arrive at the minimum of cost function $J_{1}(u^{*},v;0,x_{0})$, i.e.
\begin{equation*}
\begin{aligned}
v^{*}=&-\Sigma_{0}^{-1}(P_{1})\mathcal{G}_{1}^{\prime}(P_{1})(x(t-)-\mathbb{E}x(t-))\\
&-\Sigma_{2}^{-1}(P_{1})\tilde{\mathcal{G}}_{1}^{\prime}
(P_{1},Q_{1})(\mathbb{E}x(t-)),
\end{aligned}
\end{equation*}
\begin{equation*}
\begin{aligned}
J_{1}(u^{*},v^{*};0,x_{0})=&\mathbb{E}\left\langle x_{0}-\mathbb{E}x_{0}, P_{1}(0)(x_{0}-\mathbb{E}x_{0})\right\rangle\\
&+\left\langle\mathbb{E}x_{0},Q_{1}(0)\mathbb{E}x_{0}\right\rangle.
\end{aligned}
\end{equation*}
When $x_{0}=0,J_{1}(u^{*},v;0,0)\geq 0$, then $\|\mathcal{L}\|\leq \gamma.$
By the same procedure,
\begin{equation*}
\begin{aligned}
&J_{2}(u,v^{*};0,x_{0})\\
=&\mathbb{E}\left\langle x_{0}-\mathbb{E}x_{0}, P_{2}(0)(x_{0}-\mathbb{E}x_{0})\right\rangle+\left\langle\mathbb{E}x_{0},Q_{2}(0)\mathbb{E}x_{0}\right\rangle\\
&+\mathbb{E} \int_{0}^{T}\bigg\{
\{(u-\mathbb{E}u)+\tilde{\Sigma}_{0}^{-1}(P_{2})\mathcal{G}_{2}^{\prime}(P_{2})\}(x-\mathbb{E}x)\}^{\prime}\tilde{\Sigma}_{0}(P_{2})\cdot\\
&\{(u-\mathbb{E}u)+\tilde{\Sigma}_{0}^{-1}(P_{2})\mathcal{G}_{2}^{\prime}(P_{2})\}(x-\mathbb{E}x)\} \\
&+\{(\mathbb{E}u)+\tilde{\Sigma}_{2}^{-1}(P_{2})\tilde{\mathcal{G}}_{2}^{\prime}(P_{2},Q_{2})(\mathbb{E}x)\}^{\prime}\cdot\\
&\tilde{\Sigma}_{2}(P_{2})\{(\mathbb{E}u)+\tilde{\Sigma}_{2}^{-1}(P_{2})\tilde{\mathcal{G}}_{2}^{\prime}(P_{2},Q_{2})(\mathbb{E}x)\}\bigg\}dt,
\end{aligned}
\end{equation*}and
\begin{equation*}
\begin{aligned}
u^{*}=&-\tilde{\Sigma}_{0}^{-1}(P_{2})\mathcal{G}_{2}^{\prime}(P_{2})(x(t-)-\mathbb{E}x(t-))\\
&-\tilde{\Sigma}_{2}^{-1}(P_{2})\tilde{\mathcal{G}}_{2}^{\prime}(P_{2},Q_{2})\mathbb{E}(x(t-)).
\end{aligned}
\end{equation*}
The proof of $\|\mathcal{L}\| < \gamma$ is the same as in MF-SJBRL.

Necessity:
 Implementing $u^{*}(t,x)=K_{2}(t)x(t-)+\tilde{K_{2}}(t)\mathbb{E}(x(t-))$ in system (\ref{xstateequation}), the state equation becomes
\begin{equation*}
\left\{
\begin{aligned}
&dx(t)=\{(A+B_{2}K_{2})x(t)+(\bar{A}+B_{2}\tilde{K_{2}}+\bar{B_{2}}(K_{2}+\tilde{K_{2}}))\cdot\\
&\mathbb{E}[x(t)]
+B_{1}(t)v(t)+\bar{B_{1}}(t)\mathbb{E}[v(t)]\}dt +\\
&\{(C+D_{2}K_{2})x(t)+(\bar{C}+D_{2}\tilde{K_{2}}+\bar{D_{2}}(K_{2}+\tilde{K_{2}}))\mathbb{E}[x(t)]\\
&+D_{1}(t)v(t)+\bar{D_{1}}(t)\mathbb{E}[v(t)]\}dW(t)+\\
&\int_G\{(E+F_{2}K_{2})(\theta)x(t-)+(\bar{E}+F_{2}\tilde{K_{2}}+\bar{F_{2}}(K_{2}+\tilde{K_{2}}))\\
&(\theta)\mathbb{E}[x(t-)]+F_{1}(t,\theta)v(t)+\bar{F_{1}}(t,\theta)\mathbb{E}[v(t)]\}\tilde{N}_p(d\theta,dt),\\
&x(0)=x_{0},\\
&z(t)=\left(
\begin{array}{l}
M x(t) \\
N K_{2}x(t)+N\tilde{K_{2}}\mathbb{E}(x(t))
\end{array}\right).
\end{aligned}
\right.
\end{equation*}
 By definition of $H_{2}/H_{\infty}$ control, we have $\|\mathcal{L}\| < \gamma$. Then we can derive that Riccati equation (\ref{RE1}) has a unique solution $(P,Q)$ through MF-SJBRL. And the worst-case disturbance
 \begin{equation*}
 \begin{aligned}
 v^{*}=&-\Sigma_{0}^{-1}(P_{1})\mathcal{G}_{1}^{\prime}(P_{1})(x(t-)-\mathbb{E}(x(t-)))\\
&-\Sigma_{2}^{-1}(P_{1}) \tilde{\mathcal{G}}_{1}^{\prime}(P_{1},Q_{1})\mathbb{E}(x(t-)).
\end{aligned}
\end{equation*}
 Substituting $v^{*}(t)=K_{1}(t)x(t-)+\tilde{K_{1}}(t)\mathbb{E}(x(t-))$ into system (\ref{xstateequation}), it is obvious that minimizing $J_{2}(u,v^{*};0,x_{0})$ is a classical linear quadratic control problem under standard assumption. By using Theorem \uppercase\expandafter{\romannumeral4}.1. in \cite{2021}, the Riccati equation (\ref{RE2}) has a unique solution. Combining all of the above, we get Theorem \ref{mainresult}. The proof is completed.
\end{proof}

\section{Numerical Simulation}
In this section, we consider a portfolio problem in financial markets. System (\ref{xstateequation}) represents dynamics of the stock price, external interference $v(t)$ represents macroeconomic fluctuations, tariff policy, or other factors on the stock price, jumping process
simulates the instantaneous impact of such as breaking news events or black swan events or other events on the stock price, and the mean field term reflects the interaction between a large number of investors and the market (according to market pricing theory, the game between investors affects the overall price trend through anticipation transmission). So the stock price is modeled as formula (\ref{xstateequation}).

In order to ensure the robustness of the investment strategy, we model it as $H_{2}/H_{\infty}$ control problem represented by (\ref{j1}) and (\ref{j2}). Not only the impact of external interference is considered to avoid reliance on policy intervention leading to a large withdrawal of the portfolio, but also the implementation cost (\ref{j2}) is considered. For continuous coupled Riccati equations, it is not easy to get its unique solution. Therefore, we consider to discretize it and obtain the solution by numerically simulating the difference equation. If the matrix-valued equations (\ref{RE1})-(\ref{RE22}) are solvable, we can obtain $H_{2}/H_{\infty}$ control by the algorithm as
follows:
\begin{enumerate}
  \item For given $\gamma>0$, initial values for $(P,Q)$ and $(\mathcal{G}_1,\tilde{\mathcal{G}}_2,\mathcal{G}_2,\tilde{\mathcal{G}}_2)$ at $T$, we can compute $\Sigma_{0}, \Sigma_{2}, \tilde{\Sigma}_{0}, \tilde{\Sigma}_{2}$ and $K_{1}, K_{1}+\tilde{K}_{1}, K_{2}, K_{2}+\tilde{K}_{2}$.
  \item If $\Sigma_{0}>0, \Sigma_{2}>0, \tilde{\Sigma}_{0}>0, \tilde{\Sigma}_{2}>0$, we can substitute the obtained $K_{1}, K_{1}+\tilde{K}_{1}, K_{2}, K_{2}+\tilde{K}_{2}$ into the matrix equations (\ref{RE1})-(\ref{RE22}). Then $(P_{1}, Q_{1}, P_{2}, Q_{2})$ at $T-\Delta t$ are available by solving the matrix equations (\ref{RE1})-(\ref{RE22}) via backward integration using the right endpoint.
  \item Update $(K_{1}, K_{1}+\tilde{K}_{1}, K_{2}, K_{2}+\tilde{K}_{2})$ and $(\mathcal{G}_1,\tilde{\mathcal{G}}_2,\mathcal{G}_2,\tilde{\mathcal{G}}_2)$ using $(P_{1}, Q_{1}, P_{2}, Q_{2})$, then repeat step (2). The corresponding values at time $t=T, T-\Delta t, T-2\Delta t$, $\cdots, \Delta t, 0 $ can thus be computed recursively.
\end{enumerate}
Next, we present a two-dimensional numerical example. In system (\ref{xstateequation}), set $T=0.1$, $\Delta t=0.001$, $\gamma = 5$, $G=\{1\}, \nu(G)=1$. According to the above algorithm, we can obtain the solutions of the
coupled matrix-valued equations (\ref{RE1}), (\ref{RE11}), (\ref{RE2}), (\ref{RE22}) backward by using standard fourth-order Runge-Kutta iteration procedure. Figure \ref{p1q1} and figure \ref{p2q2} shows the evolution of $P_{1}, Q_{1}$ and $P_{2}, Q_{2}$. Figure \ref{detp1q1} and \ref{detp2q2} shows the evolution of $det(P_{1}), det(Q_{1})$ and $det(P_{2}), det(Q_{2})$.

For simplicity, we set the parameter matrix to be a constant matrix in example. There is no intrinsic difficulty for time-varying matrices. The parameters of system (\ref{xstateequation}) are set as follows:
\begin{equation*}
	\begin{aligned}
&M =\begin{bmatrix} 1 & 0 \\ 0 & 1 \end{bmatrix},
	A = \begin{bmatrix} 1 & 2 \\ -2 & 1 \end{bmatrix},
	\bar{A} = \begin{bmatrix} 1 & -2 \\ 2 & 1 \end{bmatrix},
	B_{1} =\begin{bmatrix} 1  \\  1 \end{bmatrix},
\cr
&\bar{B}_{1} =\begin{bmatrix} 0.5  \\ -1  \end{bmatrix},
	B_{2} =\begin{bmatrix} 1  \\ 1  \end{bmatrix},
	\bar{B}_{2} =\begin{bmatrix} 2  \\ -1  \end{bmatrix},
    C =\begin{bmatrix} 1 & 2 \\ 2 & 1 \end{bmatrix},
\cr
&\bar{C} =\begin{bmatrix} 1 & 2 \\ 2 & 1 \end{bmatrix},
    D_{1} =\begin{bmatrix} 2  \\  1 \end{bmatrix},
	\bar{D}_{1} =\begin{bmatrix} 1 \\ 1 \end{bmatrix},
  D_{2} =\begin{bmatrix} 2  \\ -2  \end{bmatrix},
  \cr
 & 	\bar{D}_{2} =\begin{bmatrix} -2  \\  2 \end{bmatrix},
    E =\theta*\begin{bmatrix} -1 & 1 \\ 3 & 1 \end{bmatrix},
    \bar{E} =\theta*\begin{bmatrix} -1 & 0 \\ 3 & 1 \end{bmatrix},
 \cr
&   F_{1} =\theta*\begin{bmatrix} 2  \\ 1 \end{bmatrix},
\bar{F}_{1} =\theta*\begin{bmatrix} 2  \\  2 \end{bmatrix},
    F_{2} =\theta*\begin{bmatrix} 2  \\  1 \end{bmatrix},
    \bar{F}_{2} =\theta*\begin{bmatrix} 2  \\ 2 \end{bmatrix}.
\end{aligned}
\end{equation*}
If we keep reducing the value of $\gamma$, we will meet a threshold which determine the solvability of Riccati equations (\ref{RE1})-(\ref{RE22}).
\begin{figure}[!b]
 \centering
  \centerline{\includegraphics[width=\columnwidth]{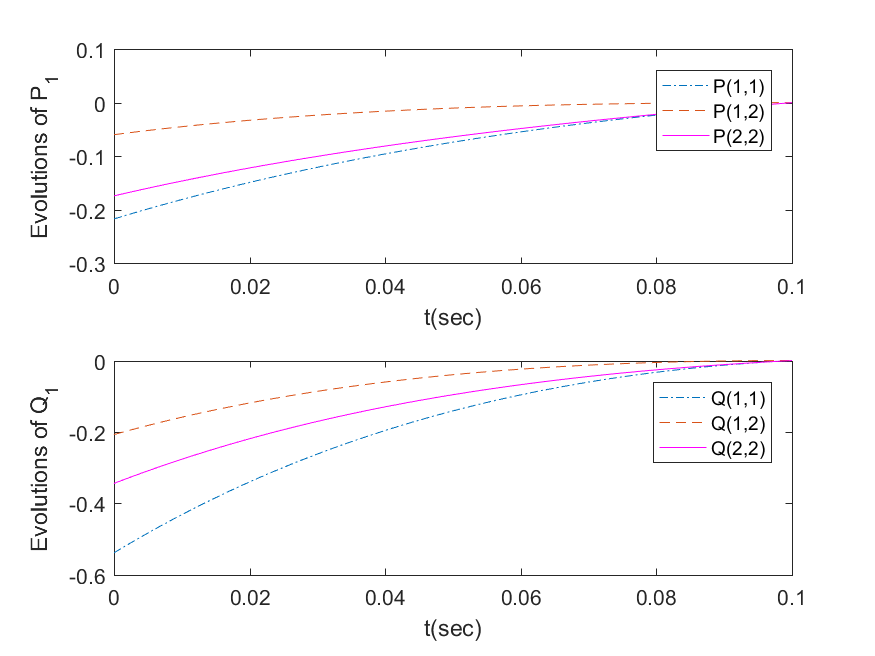}}
 \caption{The trajectories of $P_{1}$ and $Q_{1}$}\label{p1q1}
\end{figure}
\begin{figure}[!b]
 \centering
  \centerline{\includegraphics[width=\columnwidth]{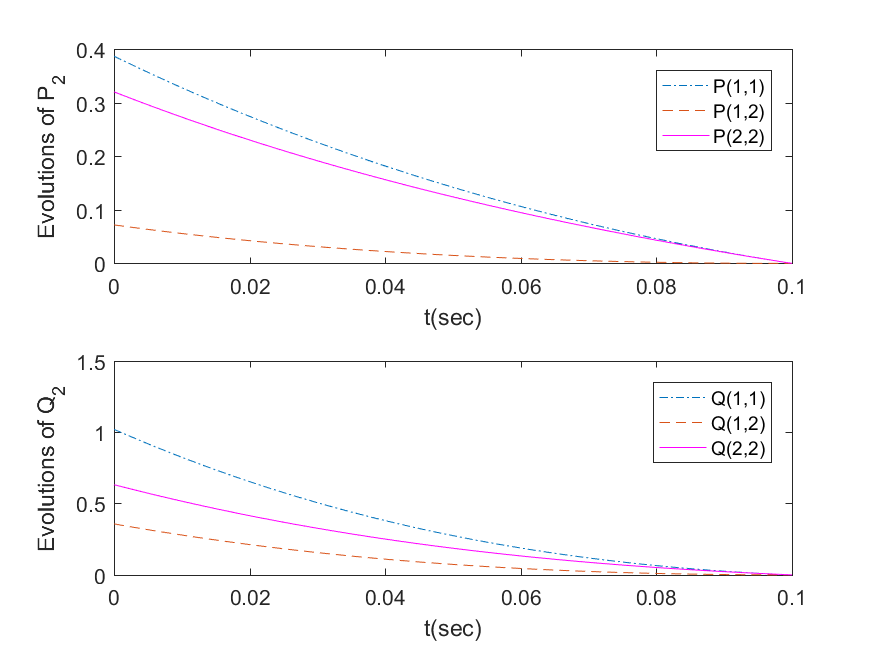}}
 \caption{The trajectories of $P_{2}$ and $Q_{2}$}\label{p2q2}
\end{figure}
\begin{figure}
 \centering
  \centerline{\includegraphics[width=\columnwidth]{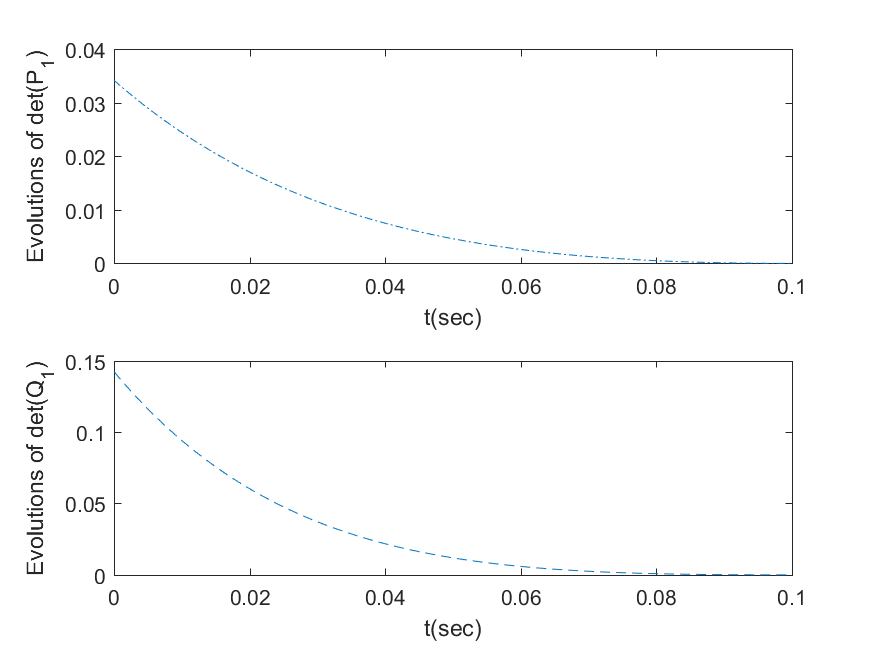}}
 \caption{The trajectories of $det(P_{1})$ and $det(Q_{1})$}\label{detp1q1}
\end{figure}
\begin{figure}
 \centering
  \centerline{\includegraphics[width=\columnwidth]{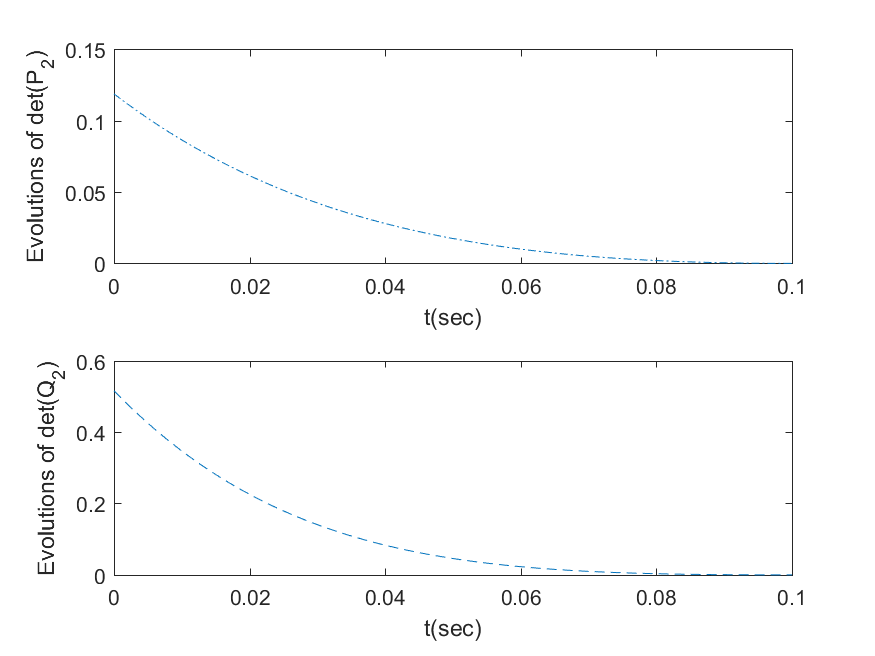}}
 \caption{The trajectories of $det(P_{2})$ and $det(Q_{2})$}\label{detp2q2}
\end{figure}
\begin{figure}
 \centering
 \centerline{\includegraphics[width=\columnwidth]{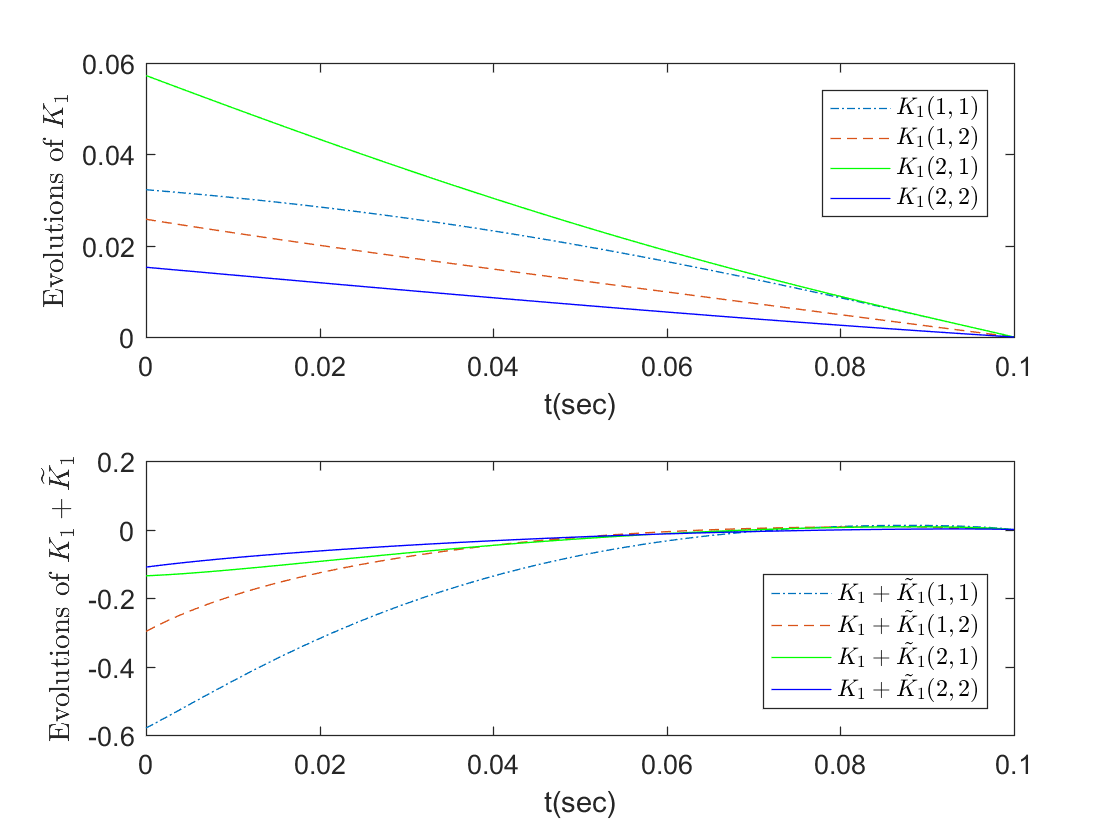}}
 \caption{The trajectories of $K_{1}$ and $K_{1}+\tilde{K}_{1}$}\label{k1}
\end{figure}
\begin{figure}
\centering
\centerline{\includegraphics[width=\columnwidth]{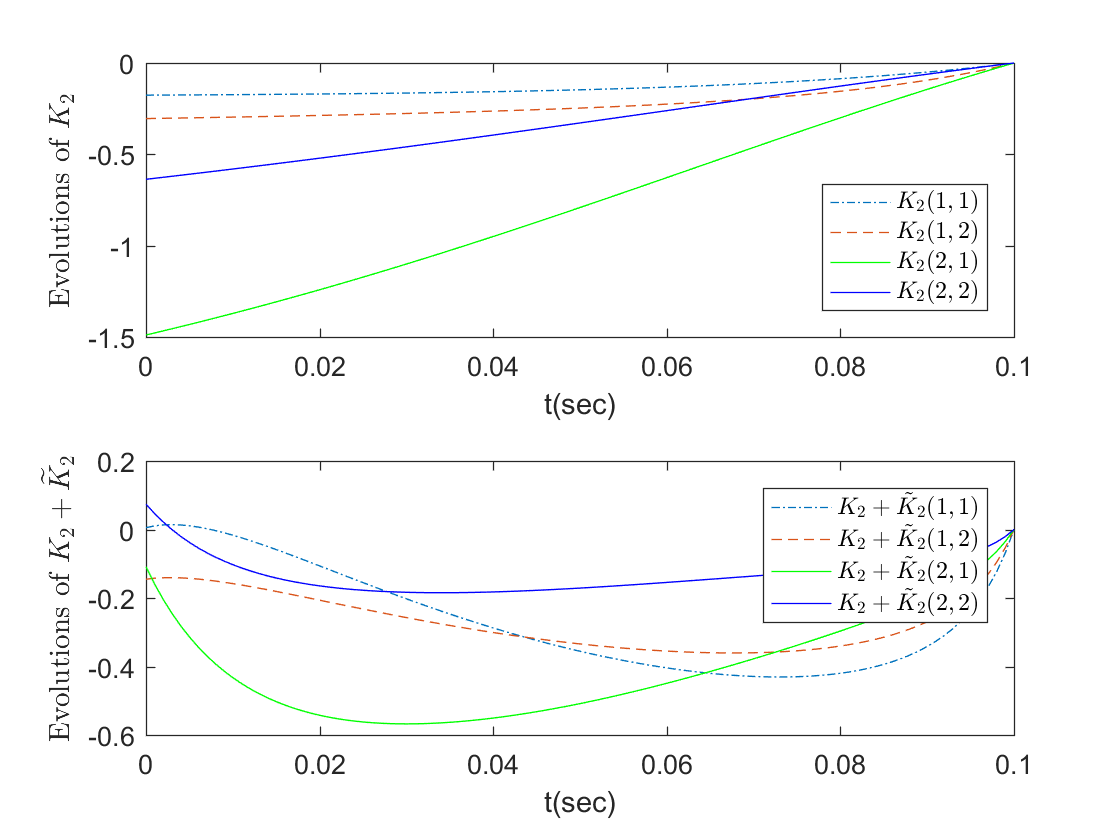}}
 \caption{The trajectories of $K_{2}$ and $K_{2}+\tilde{K}_{2}$}\label{k2}
\end{figure}

\section{Conclusion}
This paper discussed the finite horizon $H_2/H_\infty$ control problem for mean-field jump systems with $(x, u, v)$-dependent noise. A necessary and sufficient condition is derived based on four coupled Riccati equations, for which a recursive algorithm is provided. A model-free reinforcement learning approach is also proposed to design robust controllers for mean-field systems. Potential extensions include applying the framework to infinite horizon problems and systems with random coefficients.

\section*{Appendixes}
1. To facilitate readers' understanding and avoid potential misinterpretations, we first present a proof sketch of MF-SJBRL.

Sufficiency:
\begin{itemize}
  \item Complete the square for $J_1(0,v,\tau,\xi)$ using equations (\ref{3.4})-(\ref{3.5}) to obtain $J_1(0,v,\tau,\xi) \geq 0$.
  \item Prove $J_1(0,v,\tau,\xi) > 0, \forall v\neq0$ via the inverse mapping theorem.
\end{itemize}

Necessity:
\begin{itemize}
 \item  Derive the quasi-linear equation (\ref{quasi}) from (\ref{3.4}).

 \item  Perform Picard iteration for any initial matrix $\hat{P}$ using (\ref{quasi}).

 \item  Apply Lemma 3 to show that the sequence $\{P_n\}$ generated by the Picard iteration is monotonic.

 \item  Use Lemma 5 and 6 to prove that the decreasing sequence $\{P_n\}$ obtained from the Picard iteration is bounded below; then apply the monotone convergence theorem and the dominated convergence theorem to prove that the sequence has a limit and the limit is solution to (\ref{3.4}).

 \item  It follows from Lemma 7 that the algebraic condition $\Sigma_0(P)>0$ is satisfied.

 \item  Repeat the above process for equation (\ref{3.5}).
\end{itemize}

\begin{proof}[Proof of Lemma \ref{lemmaPpositive}]
Since the equation is linear and all coefficients are uniformly bounded, it admits a unique solution $P \in C\left(0, T ; S^{n}\right)$. For any given $x \in \mathbb{R}^{n}$, suppose $\phi(\cdot)$ is the solution of the following equation:
\begin{equation*}
\left\{
\begin{aligned}
\mathrm{d} \phi(s)=&\tilde{A}(s) \phi(s) ds+\tilde{C}(s) \phi(s) \mathrm{d} W(s)\\
&+\int_{G}\left(\tilde{E}(s) \phi(s-)\right) \nu (d\theta)ds, \\
\phi(t)=&x, \quad t \in[0, T] .
\end{aligned}
\right.
\end{equation*}
Through \mbox{It\^o} formula, we obtain
\begin{equation}\label{lemmap}
\begin{aligned}
&d\left(\phi^{\prime}(s) P(s) \phi(s)\right)=\phi(s)^{\prime}\dot{P}(s)\phi(s)ds+\\
&\phi^{\prime}(s)\left[P(s) \tilde{A}(s)+\tilde{A}^{\prime}(s) P(s)+\tilde{C}^{\prime}(s) P(s) \tilde{C}(s)\right] \phi(s) ds \\
&+\phi^{\prime}(s)\left[\tilde{C}^{\prime}(s) P(s)+P(s) \tilde{C}(s)\right] \phi(s) \mathrm{d} W(s) \\ &+\int_{G}\left[\phi^{\prime}(s)\tilde{E}^{\prime}(s) P(s) \tilde{E}(s) \phi(s)\right] \nu(d\theta)ds \\
&+\int_{G}\phi^{\prime}(s-)\tilde{E}^{\prime}(s) P(s) \tilde{E}(s) \phi(s-) \tilde{N}_{p}(d\theta,ds) .
\end{aligned}
\end{equation}
Integrating from $t$ to $T$, and taking $\mathbb{E}$ on both sides of (\ref{lemmap}) yield
\begin{equation*}
\langle P(t) x, x\rangle=\mathbb{E}\left\{\phi^{\prime}(T) \tilde{G} \phi(T)+\int_{t}^{T} \phi^{\prime}(s) \tilde{Q}(s) \phi(s) \mathrm{d} s\right\}.
\end{equation*}
Given $\tilde{G} \geqslant 0, \tilde{Q} \geqslant 0$,  it follows that $P(t) \geqslant 0$ for all $t\in[0,T]$.
\end{proof}

\begin{proof}[Proof of Lemma \ref{lemmalinearj1}]
\begin{equation*}
\begin{aligned}
& J_{1}\left(0,v+\varphi(x^{\varphi,\psi}-\mathbb{E}x^{\varphi,\psi})+ \psi\mathbb{E}x^{\varphi,\psi}; \tau, \xi\right) \\
=&\mathbb{E} \int_{\tau}^{T}\left(\gamma^{2}\|v+\varphi(x^{\varphi,\psi}-\mathbb{E}x^{\varphi,\psi})+\psi\mathbb{E}x^{\varphi,\psi}\|^{2}\right.\\
&\left.-(x^{\varphi,\psi})^{\prime}M_{11}^{\prime} M_{11} x^{\varphi,\psi}\right) dt\\
&+\mathbb{E} \int_{\tau}^{T}d((x^{\varphi,\psi}-\mathbb{E}x^{\varphi,\psi})^{\prime}P^{\gamma, \varphi}(x^{\varphi,\psi}-\mathbb{E}x^{\varphi,\psi}))\\
&-\mathbb{E} \int_{\tau}^{T}d((x^{\varphi,\psi}-\mathbb{E}x^{\varphi,\psi})^{\prime}P^{\gamma, \varphi}(x^{\varphi,\psi}-\mathbb{E}x^{\varphi,\psi}))\\
&+\mathbb{E} \int_{\tau}^{T}d((\mathbb{E}x^{\varphi,\psi})^{\prime}Q^{\gamma, \varphi,\psi}(\mathbb{E}x^{\varphi,\psi}))\\
&-E \int_{\tau}^{T}d((\mathbb{E}x^{\varphi,\psi})^{\prime}Q^{\gamma, \varphi,\psi}(\mathbb{E}x^{\varphi,\psi})) .
\end{aligned}
\end{equation*}
Then by Lemma \ref{lemmasemiito} and equations (\ref{SBRLlinearRE1})-(\ref{SBRLlinearRE2}), we can derive (\ref{SBRLJ1linear}).
\end{proof}

\begin{proof}[Proof of Lemma \ref{lemmainitial}]
By linearity of the system, the solution $x(t,v;\tau,\xi)$ to system (\ref{SBRLxstate}) can be decomposed as
$x(t,v;\tau,\xi)=x(t,v;\tau,0)+x(t,0;\tau,\xi).$
Denote $X$ and $Y$ as the solutions of
\begin{equation*}
\begin{aligned}
&\begin{cases}
  \mathcal{S}(X)=0,\\
  X(T)=0
\end{cases}\\
\end{aligned}
\end{equation*}
and
\begin{equation*}
\begin{aligned}
&\begin{cases}
    \tilde{\mathcal{S}}(Y,Y)=0,\\
    Y(T)=0
\end{cases}
\end{aligned}
\end{equation*}
respectively. It is easy to check that
\begin{equation*}
\begin{aligned}
&J_{1}\left(0,v; \tau, \xi\right)-J_{1}\left(0,v; \tau, 0\right)\\
=&\mathbb{E}\left\langle(\xi-\mathbb{E}\xi), X_{\tau}(\xi-\mathbb{E}\xi)\right\rangle+\left\langle\mathbb{E}\xi, Y_{\tau}\mathbb{E}\xi\right\rangle\\
+&\mathbb{E}\int_{\tau}^{T}(v-\mathbb{E}v)\mathcal{G}^{\prime}(X)(x(t,0;\tau,\xi)-\mathbb{E}x(t,0;\tau,\xi))\\
+&(x(t,0;\tau,\xi)-\mathbb{E}x(t,0;\tau,\xi))^{\prime}\mathcal{G}(X)(v-\mathbb{E}v)\\
+&(\mathbb{E}v)\tilde{\mathcal{G}}^{\prime}(X,Y)(\mathbb{E}x(t,0;\tau,\xi))\\
+&(\mathbb{E}x(t,0;\tau,\xi))^{\prime}\tilde{\mathcal{G}}(X,Y)(\mathbb{E}v)dt.
\end{aligned}
\end{equation*}
Because of $\|\tilde{\mathcal{L}}\|<\gamma$, we can take $0 \leq \epsilon^{2} \leq \gamma^{2}-\|\tilde{\mathcal{L}}\|^{2}$, then
\begin{equation*}
\begin{aligned}
&J_{1}\left(0,v; \tau, 0\right)\geq  \gamma^{2}\|\bar{v}\|^{2}_{[0,T]}-\|z_{1}\|^{2}_{[0,T]}\\
\geq & (\gamma^{2}-\|\tilde{\mathcal{L}}\|^{2})\|\bar{v}\|^{2}_{[0,T]}
\geq  \epsilon^{2}\|\bar{v}\|^{2}_{[0,T]}
= \epsilon^{2}\|v\|^{2}_{[\tau,T]},
\end{aligned}
\end{equation*}
where
$$\bar{v}=
\begin{cases}
v, \quad t\in[\tau,T],\\
0, \quad t\in[0,\tau) .
\end{cases}$$
Therefore, by completing the square,
\begin{equation*}
\begin{aligned}
&J_{1}\left(0,v; \tau, \xi\right)\\
\geq&\mathbb{E}\left\langle(\xi-\mathbb{E}\xi), X_{\tau}(\xi-\mathbb{E}\xi)\right\rangle+\left\langle\mathbb{E}\xi, Y_{\tau}\mathbb{E}\xi\right\rangle+\mathbb{E}\int_{\tau}^{T}\Big\{\epsilon^{2}\|v\|^{2}\\
&+(v-\mathbb{E}v)\mathcal{G}^{\prime}(X)(x(t,0;\tau,\xi)-\mathbb{E}x(t,0;\tau,\xi))\\
&+(x(t,0;\tau,\xi)-\mathbb{E}x(t,0;\tau,\xi))^{\prime}\mathcal{G}(X)(v-\mathbb{E}v)\\
&+(\mathbb{E}v)\tilde{\mathcal{G}}^{\prime}(X,Y)(\mathbb{E}x(t,0;\tau,\xi))\\
&+(\mathbb{E}x(t,0;\tau,\xi))^{\prime
}\tilde{\mathcal{G}}(X,Y)(\mathbb{E}v)\Big\}dt\\
\geq&\mathbb{E}\left\langle(\xi-\mathbb{E}\xi), X_{\tau}(\xi-\mathbb{E}\xi)\right\rangle+\left\langle\mathbb{E}\xi, Y_{\tau}\mathbb{E}\xi\right\rangle\\
&-\mathbb{E}\int_{\tau}^{T}\|\frac{1}{\epsilon}\mathcal{G}^{\prime}(X)(x(t,0;\tau,\xi)-\mathbb{E}x(t,0;\tau,\xi))\|^{2}\\
&-\|\frac{1}{\epsilon}\tilde{\mathcal{G}}^{\prime}(X,Y)(\mathbb{E}x(t,0;\tau,\xi))\|^{2}dt.
\end{aligned}
\end{equation*}
By Lemma \ref{lemmaxstatesolution} and the estimate (\ref{estimate}), there are $\alpha_{1},\alpha_{2} >0$ satisfying
\begin{equation*}
\begin{aligned}
\mathbb{E}\int_{\tau}^{T}\|(x(t,0;\tau,\xi)-\mathbb{E}x(t,0;\tau,\xi))\|^{2}dt &\leq \alpha_{1}\mathbb{E}\|\xi-\mathbb{E}\xi\|^{2},\\
\mathbb{E}\int_{\tau}^{T}\|\mathbb{E}x(t,0;\tau,\xi)\|^{2}dt &\leq \alpha_{2}\|\mathbb{E}\xi\|^{2},
\end{aligned}
\end{equation*}
and there are $\alpha_{3},\alpha_{4}>0$ that the following hold.
\begin{equation*}
\begin{aligned}
&\mathbb{E}\left\langle(\xi-\mathbb{E}\xi), X_{\tau}(\xi-\mathbb{E}\xi)\right\rangle=-\mathbb{E}\int_{\tau}^{T}d(x(t,0;\tau,\xi)-\\
&\mathbb{E}x(t,0;\tau,\xi))^{\prime}X (x(t,0;\tau,\xi)-\mathbb{E}x(t,0;\tau,\xi))\\
=&-\mathbb{E}\int_{\tau}^{T}(x(t,0;\tau,\xi)-\mathbb{E}x(t,0;\tau,\xi))^{\prime}M_{11}^{\prime}M_{11}(x(t,0;\tau,\xi)\\
&-\mathbb{E}x(t,0;\tau,\xi))dt\\
\geq&-\alpha_{3}\mathbb{E}\|\xi-\mathbb{E}\xi\|^{2},\\
&\left\langle\mathbb{E}\xi, Y_{\tau}\mathbb{E}\xi\right\rangle=-\mathbb{E}\int_{\tau}^{T}d(\mathbb{E}x)^{\prime}Y (\mathbb{E}x)\\
=&-\mathbb{E}\int_{\tau}^{T}(\mathbb{E}x(t,0;\tau,\xi))^{\prime}M_{11}^{\prime}M_{11}(\mathbb{E}x(t,0;\tau,\xi))dt\\
\geq&-\alpha_{4}\|\mathbb{E}\xi\|^{2}.
\end{aligned}
\end{equation*}
Then there exists $\mu> 0$, such that $J_{1}\left(0, v; \tau, \xi\right)\geq -\mu \mathbb{E}|\xi|^{2}.$ The proof is completed.
\end{proof}

\begin{proof}[Proof of Lemma \ref{main}]
For any deterministic $\tilde{v}(\cdot) \in \mathbb{R}^{n_{v}}$, let $x$ be the solution of
\begin{equation*}
\left\{
\begin{aligned}
dx(t)=&\big\{A_{11}(t)x(t)+B_{11}(t)v(t)\big\}dt\\
&+\big\{C_{11}(t)x(t)+D_{11}(t)v(t)\big\}dW(t)\\
&+\int_G\{E_{11}(t,\theta)x(t-)+F_{11}(t,\theta)v(t)\}\tilde{N}_p(d\theta,dt), \\
x(0)=&0, \quad t \in[0, T],\\
\end{aligned}
\right.
\end{equation*}
and set (where $t-$ is omitted and will not be noted hereafter).
\begin{equation*}
v(\cdot) \triangleq \tilde{v} W+\varphi(x-\mathbb{E}x)+ \psi\mathbb{E}x \quad \in \mathcal{U}([0,T];\mathbb{R}^{n_{v}}) .
\end{equation*}
Clearly,
$$
\mathbb{E}\left[x(t)\right]=0, \quad \mathbb{E}[v(t)]=0, \quad t \in[0, T]
$$
By the uniqueness of the solution, $x$ also solves (\ref{SBRLxstate}) when $x_{0}=0$.

If $\|\tilde{\mathcal{L}}\|<\gamma$, then
$$
J_{1}\left(0, v; 0, 0\right)\geq \delta \mathbb{E} \int_{0}^{T}|v(s)|^{2} ds, \forall v\in \mathcal{U}([0,T];\mathbb{R}^{n_{v}}).
$$
By Lemma \ref{lemmalinearj1},
\begin{equation}\label{11}
\begin{aligned}
& J_{1}\left(0,\tilde{v}W+\varphi(x^{\varphi,\psi}-\mathbb{E}x^{\varphi,\psi})+ \psi\mathbb{E}x^{\varphi,\psi}; 0, 0\right) \\
=&\mathbb{E}
\int_{0}^{T}
\left\langle \tilde{v}W,(\mathcal{G}^{\prime}(P^{\gamma, \varphi})+\Sigma_{0}(P^{\gamma, \varphi})\varphi(t))x^{\varphi,\psi}(t)\right\rangle\\
&+\left\langle(\mathcal{G} ^{\prime}(P^{\gamma, \varphi})+\Sigma_{0}(P^{\gamma, \varphi})\varphi(t))x^{\varphi,\psi}(t),\tilde{v}W\right\rangle\\
&+\left\langle \tilde{v}W, \Sigma_{0}(P^{\gamma, \varphi})\tilde{v}W \right\rangle dt  \\
\geq&\delta \mathbb{E} \int_{0}^{T}|\tilde{v}W+\varphi x^{\varphi,\psi}|^{2} dt.
\end{aligned}
\end{equation}
Hence, the following holds:
\begin{equation*}
\begin{aligned}
&\mathbb{E} \int_{0}^{T} 2\left\langle\left[\mathcal{G}^{\prime}(P^{\gamma, \varphi})+(\Sigma_{0}(P^{\gamma, \varphi})-\delta I)\varphi\right] W x^{\varphi,\psi}, \tilde{v}\right\rangle\\
& +W^2\left\langle\left(\Sigma_{0}(P^{\gamma, \varphi})-\delta I\right) \tilde{v}, \tilde{v}\right\rangle dt\geq 0 .
\end{aligned}
\end{equation*}
Now, applying \mbox{It\^o's} formula, we have
\begin{equation*}
\left\{\begin{array}{l}
d \mathbb{E}\left[W(s) x^{\varphi,\psi}(s)\right]=\left\{[(A_{11}(s)+B_{11}(s)\varphi(s))]\right. \\
\left.\mathbb{E}\left[W(s) x^{\varphi,\psi}(s)\right]+s B_{11}(s) \tilde{v}(s)\right\} \mathrm{d} s, \quad s \in[0, T], \\
\mathbb{E}\left[W(0) x^{\varphi,\psi}(0)\right]=0 .
\end{array}\right.
\end{equation*}
Fix any $u_0 \in \mathbb{R}^{n_{v}}$ and take $\tilde{v}(s)=u_0 \mathbf{1}_{\left[t^{\prime}, t^{\prime}+h\right]}(s)$, with $0 < t^{\prime}<t^{\prime}+h \leqslant T$. Then
\begin{equation*}
\mathbb{E}\left[W(s) x(s)\right]= \begin{cases}
0, \quad\quad s \in\left[0, t^{\prime}\right], \\
\Phi(s) \int_{t^{\prime}}^{s \wedge\left(t^{\prime}+h\right)} \Phi(r)^{-1} B_{11}(r) r u_0 \mathrm{~d} r, \\
\quad\quad\quad s \in\left[t^{\prime}, T\right],
\end{cases}
\end{equation*}
where $\Phi(\cdot)$ is the solution of the following ordinary differential equation:
\begin{equation*}
\left\{\begin{array}{l}
\dot{\Phi}(s)=[A_{11}(s)+B_{11}(s)\varphi(s)] \Phi(s), \quad s \in[0, T], \\
\Phi(0)=I .
\end{array}\right.
\end{equation*}
Consequently, (\ref{11}) becomes
\begin{equation*}
\begin{aligned}
\int_{t^{\prime}}^{t^{\prime}+h}\Big\{ & 2\left\langle\left[\mathcal{G}^{\prime}(P^{\gamma, \varphi})+\left(\Sigma_{0}(P^{\gamma, \varphi})-\delta I\right) \varphi\right] \Phi(s)\cdot\right.\\
&\left. \int_{t^{\prime}}^s \Phi(r)^{-1} B_{11}(r) r u_0 \mathrm{d} r, u_0\right\rangle \\
&+s\left\langle\left(\Sigma_{0}(P^{\gamma, \varphi})-\delta I\right) u_0, u_0\right\rangle\Big\} \mathrm{d} s \geqslant 0 .
\end{aligned}
\end{equation*}
Dividing both sides by $h$ and letting $h \rightarrow 0$, by using Lebesgue differentiation theorem, we obtain
\begin{equation*}
t^{\prime}\left\langle\left[\Sigma_{0}(P^{\gamma, \varphi})-\delta I\right] u_0, u_0\right\rangle \geqslant 0, \quad \forall u_0 \in \mathbb{R}^{n_{v}},  t^{\prime} \in(0, T].
\end{equation*}
By the continuity of $\Sigma_{0}(P^{\gamma, \varphi})$ on [0,T], $\Sigma_{0}(P^{\gamma, \varphi})\geq \delta I$ .
Set\begin{equation*}
v(\cdot) \triangleq \tilde{v}+\varphi(x-\mathbb{E}x)+ \psi\mathbb{E}x \quad \in \mathcal{U}([0,T];\mathbb{R}^{n_{v}}) .
\end{equation*}
By lemma \ref{lemmalinearj1},
\begin{equation}\label{22}
\begin{aligned}
& J_{1}\left(0,\tilde{v}+\varphi(x^{\varphi,\psi}-\mathbb{E}x^{\varphi,\psi})+ \psi\mathbb{E}x^{\varphi,\psi}; 0, 0\right)= \\
&\mathbb{E}
\int_{0}^{T}\left\langle\mathbb{E}v(t),(\tilde{\mathcal{G}}^{\prime}(P^{\gamma, \varphi},Q^{\gamma,\varphi,\psi})+\Sigma_{2}(P^{\gamma, \varphi})\psi(t))\mathbb{E}x^{\varphi,\psi}(t)\right\rangle\\
&+\left\langle(\tilde{\mathcal{G}}^{\prime}(P^{\gamma, \varphi},Q^{\gamma,\varphi,\psi})+\Sigma_{2}(P^{\gamma, \varphi})\psi(t))\mathbb{E}x^{\varphi,\psi}(t),\mathbb{E}v(t)\right\rangle\\
&+\left\langle\mathbb{E}v(t), \Sigma_{2}(P^{\gamma, \varphi})\mathbb{E}v(t)\right\rangle dt\\
\geq &\delta \mathbb{E} \int_{0}^{T}|\tilde{v}+\varphi(x^{\varphi,\psi}-\mathbb{E}x^{\varphi,\psi})+ \psi\mathbb{E}x^{\varphi,\psi}|^{2} dt.
\end{aligned}
\end{equation}
Hence, the following holds:
\begin{equation*}
\begin{aligned}
 &\int_{0}^{T} 2\left\langle\left[\tilde{\mathcal{G}}^{\prime}(P^{\gamma, \varphi},Q^{\gamma,\varphi,\psi})+(\Sigma_{2}(P^{\gamma, \varphi})-\delta I)\psi(t)\right] \mathbb{E} x^{\varphi,\psi}, \tilde{v}\right\rangle\\
 & +\left\langle\left(\Sigma_{2}(P^{\gamma, \varphi})-\delta I\right) \tilde{v}, \tilde{v}\right\rangle dt\geq 0 .
\end{aligned}
\end{equation*}
Now, applying \mbox{It\^o} formula, we have
\begin{equation*}
\left\{\begin{array}{l}
d \mathbb{E}\left[x^{\varphi,\psi}(s)\right]=\big\{[(A_{11}(s)+\bar{A}_{11}(s))+(B_{11}(s)\\
+\bar{B}_{11}(s))\psi(s)] \mathbb{E}\left[ x^{\varphi,\psi}(s)\right]+(B_{11}(s)+\bar{B}_{11}(s))\tilde{v}(s)\big\} \mathrm{d} s,  \\
\mathbb{E}\left[x^{\varphi,\psi}(0)\right]=0 ,\quad s \in[0, T].
\end{array}\right.
\end{equation*}
Fix any $u_0 \in \mathbb{R}^{n_{v}}$ and take $\tilde{v}(s)=u_0 \mathbf{1}_{\left[t^{\prime}, t^{\prime}+h\right]}(s)$, with $0 \leqslant t^{\prime}<t^{\prime}+h \leqslant T$. Then
\begin{equation*}
\mathbb{E}\left[x^{\varphi,\psi}(s)\right]=
\begin{cases}
0, \quad \quad s \in\left[0, t^{\prime}\right], \\
\Phi(s) \int_{t^{\prime}}^{s \wedge\left(t^{\prime}+h\right)} \Phi(r)^{-1} \bar{B}_{11}(r) u_0 \mathrm{d} r,\\
\quad \quad \quad s \in\left[t^{\prime}, T\right],
\end{cases}
\end{equation*}
where $\Phi(\cdot)$ is the solution of the following ordinary differential equation:
\begin{equation*}
\left\{\begin{array}{l}
\dot{\Phi}(s)=[(A_{11}(s)+\bar{A}_{11}(s))+(B_{11}(s)+\bar{B}_{11}(s))\psi(s)] \Phi(s), \\
\Phi(0)=I,  \quad s \in[0, T].
\end{array}\right.
\end{equation*}
Consequently, (\ref{22}) becomes
\begin{equation*}
\begin{aligned}
\int_{t^{\prime}}^{t^{\prime}+h}\Big\{& 2\left\langle\left[\tilde{\mathcal{G}}^{\prime}(P^{\gamma, \varphi},Q^{\gamma,\varphi,\psi})+(\Sigma_{2}(P^{\gamma, \varphi})-\delta I)\psi(s)\right] \Phi(s)\cdot\right.\\
&\left. \int_{t^{\prime}}^s \Phi(r)^{-1} \bar{B}_{11}(r) u_0 \mathrm{d} r, u_0\right\rangle \\
&+\left\langle\left(\Sigma_{2}(P^{\gamma, \varphi})-\delta I\right) u_0, u_0\right\rangle\Big\} \mathrm{d} s \geqslant 0 .
\end{aligned}
\end{equation*}
Dividing both sides by $h$ and letting $h \rightarrow 0$, by using Lebesgue differentiation theorem, we obtain
\begin{equation*}
\left\langle\left[\Sigma_{2}(P^{\gamma, \varphi})-\delta I\right] u_0, u_0\right\rangle \geqslant 0, \quad \forall u_0 \in \mathbb{R}^{n_{v}}, \quad \text { a.e. } t^{\prime} \in[0, T].
\end{equation*}
So $\Sigma_{0}(P^{\gamma, \varphi})\geq \delta I$ and $\Sigma_{2}(P^{\gamma, \varphi})\geq \delta I$.
\end{proof}

\begin{proof}[Proof of Lemma \ref{SBJRL}]
Sufficiency:

By \mbox{It\^o} formulation and DRE (\ref{3.4})-(\ref{3.5}), the following equation holds.
\begin{equation*}
\begin{aligned}
& J_{1}\left(0,v; \tau, \xi\right) \\
=&\mathbb{E}\left\langle \xi-\mathbb{E}\xi, P(\tau)(\xi-\mathbb{E}\xi)\right\rangle+\left\langle\mathbb{E}\xi,Q(\tau)\mathbb{E}\xi\right\rangle\\
&+\mathbb{E} \int_{\tau}^{T}\big\{
\{(v-\mathbb{E}v)-\Phi(P)(x-\mathbb{E}x)\}^{\prime}\Sigma_{0}(P)\\
&\{(v-\mathbb{E}v)-\Phi(P)(x-\mathbb{E}x)\} \\
&+\{(\mathbb{E}v)-\Psi(P,Q)(\mathbb{E}x)\}^{\prime}\Sigma_{2}(P)\\
&\{(\mathbb{E}v)-\Psi(P,Q)(\mathbb{E}x)\}  \big\}dt.
\end{aligned}
\end{equation*}
When $\tau=0,\xi=0$, $$J_{1}\left(0,v; 0, 0\right)=\mathbb{E} \int_{0}^{T}\left(\gamma^{2}\|v\|^{2}-\left\|z_{1}\right\|^{2}\right) d t \geq 0,$$
i.e. $\|\tilde{\mathcal{L}}\|_{[0,T]}\leq \gamma$. We prove $\|\tilde{\mathcal{L}}\|_{[0,T]} < \gamma$ below.
Define the operators $\mathcal{L}_{1}: L_{\mathcal{F}}^{2}\left([0, T], \mathbb{R}^{n_{v}}\right) \mapsto L_{\mathcal{F}}^{2}\left([0, T], \mathbb{R}^{n_{v}}\right)$ and $\tilde{\mathcal{L}}_{1}: L_{\mathcal{F}}^{2}\left([0, T], \mathbb{R}^{n_{v}}\right) \mapsto L_{\mathcal{F}}^{2}\left([0, T], \mathbb{R}^{n_{v}}\right)$ as
$$
\mathcal{L}_{1} (v(t)-\mathbb{E}v(t))=v(t)-\mathbb{E}v(t)-(v^{*}(t)-\mathbb{E}v^{*}(t)),
$$
$$
\tilde{\mathcal{L}}_{1} (\mathbb{E}v(t))=\mathbb{E}v(t)-\mathbb{E}v^{*}(t),
$$
with the realization
\begin{equation}
\begin{aligned}
&\begin{cases}
d\mathbb{E}[x(t)]=\{(A_{11}(t)+\bar{A}_{11}(t))\mathbb{E}[x(t)]
+(B_{11}(t)\\+\bar{B_{11}}(t))\mathbb{E}[v(t)]\}dt, \\
\mathbb{E}[x(0)]=\mathbb{E}[x_{0}],
\end{cases}\\
&\begin{cases}
dx(t)-\mathbb{E}[x(t)]=
\{A_{11}(t)(x(t)-\mathbb{E}[x(t)])+\\
B_{11}(t)(v(t)-\mathbb{E}[v(t)])\}dt\\
+\{C_{11}(t)(x(t)-\mathbb{E}[x(t)])+(C_{11}(t)+\bar{C}_{11}(t))\mathbb{E}[x(t)]+\\
D_{11}(t)(v(t)-\mathbb{E}[v(t)])+(D_{11}(t)+\bar{D}_{11}(t))\mathbb{E}[v(t)]\}dW(t)\\
+\int_G\{E_{11}(t,\theta)(x(t-)-\mathbb{E}[x(t-)])+(E_{11}(t,\theta)+\\
\bar{E}_{11}(t,\theta))\mathbb{E}[x(t-)]+F_{11}(t,\theta)(v(t)-\mathbb{E}[v(t)])+\\
(F_{11}(t,\theta)+\bar{F}_{11}(t,\theta))\mathbb{E}[v(t)]\}\tilde{N}_p(d\theta,dt),\\
x(0)-\mathbb{E}[x(0)]=x_{0}-\mathbb{E}[x_{0}].
\end{cases}
\end{aligned}
\end{equation}
\begin{equation*}
\begin{aligned}
&\mathbb{E}v(t)-\mathbb{E}v^{*}(t)=\mathbb{E}v(t)-\Psi(t)\mathbb{E}x(t),\\
&v(t)-\mathbb{E}v(t)-(v^{*}(t)-\mathbb{E}v^{*}(t))=
\\&v(t)-\mathbb{E}v(t)-\Phi(t)(x(t)-\mathbb{E}x(t)).
\end{aligned}
\end{equation*}
Then this is a linear continuous bijection. By inverse mapping theorem, $\mathcal{L}^{-1}_{1},\tilde{\mathcal{L}}^{-1}_{1}$ exists and $\mathcal{L}^{-1}_{1},\tilde{\mathcal{L}}^{-1}_{1}$ is bounded, which is determined by
\begin{equation*}
\left\{\begin{aligned}
&d\mathbb{E}[x(t)]=(A_{11}(t)+\bar{A}_{11}(t)-(B_{11}(t)+\bar{B_{11}}(t))\Psi(t))\\
&\mathbb{E}[x(t)]+(B_{11}(t)+\bar{B_{11}}(t))(\mathbb{E}[v(t)]-\mathbb{E}[v^{*}(t)])dt ,\\
&\mathbb{E}[x(0)]=\mathbb{E}[\xi]
\end{aligned}\right.
\end{equation*}
with
$$
\mathbb{E}v(t)=\Psi(t)\mathbb{E}x(t)+(\mathbb{E}v(t)-\mathbb{E}v^{*}(t)),
$$
and
\begin{equation*}
\left\{
\begin{aligned}
&dx(t)-\mathbb{E}[x(t)]=\big\{(A_{11}(t)-B_{11}(t)\Psi(t))(x(t)-\mathbb{E}[x(t)])\\
&+B_{11}(t)(v(t)-\mathbb{E}[v(t)]-(v^{*}(t)-\mathbb{E}v^{*}(t)))\big\}dt \\
&+\big\{(C_{11}(t)-D_{11}(t)\Psi(t))(x(t)-\mathbb{E}[x(t)])\\
&+(C_{11}(t)+\bar{C}_{11}(t))\mathbb{E}[x(t)]\\
&+D_{11}(t)(v(t)-\mathbb{E}[v(t)]-(v^{*}(t)-\mathbb{E}v^{*}(t)))\\
&+(D_{11}(t)+\bar{D}_{11}(t))\mathbb{E}[v(t)]\big\}dW(t)\\
&+\int_G\big\{(E_{11}(t,\theta)-F_{11}(t,\theta)\Psi(t))(x(t-)-\mathbb{E}[x(t-)])\\
&+(E_{11}(t,\theta)+\bar{E}_{11}(t,\theta))\mathbb{E}[x(t-)]\\
&+F_{11}(t,\theta)(v(t)-\mathbb{E}[v(t)]-(v^{*}(t)-\mathbb{E}v^{*}(t)))\\
&+(F_{11}(t,\theta)-\bar{F}_{11}(t,\theta))\mathbb{E}[v(t)]\big\}\tilde{N}_p(d\theta,dt),\\
&x(0)-\mathbb{E}[x(0)]=\xi-\mathbb{E}[\xi]
\end{aligned}
\right.
\end{equation*}
with
\begin{multline}
v(t)-\mathbb{E}v(t)=\\
\Phi(t)(x(t)-\mathbb{E}x(t))+(v(t)-\mathbb{E}v(t)-(v^{*}(t)-\mathbb{E}v^{*}(t))).\nonumber
\end{multline}
Then there exists $\varepsilon > 0, \delta > 0$, such that
\begin{equation*}
\begin{aligned}
&J_{1}\left(0,v; 0, 0\right)\\
=&\mathbb{E} \int_{0}^{T}\big\{
\{(v-\mathbb{E}v)-\Phi(x-\mathbb{E}x)\}^{\prime}\Sigma_{0}(P)\\
&\{(v-\mathbb{E}v)-\Phi(x-\mathbb{E}x)\} +\\
&\{(\mathbb{E}v)-\Psi(\mathbb{E}x)\}^{\prime}\Sigma_{2}(P)\{(\mathbb{E}v)-\Psi(\mathbb{E}x)\}  \big\}dt\\
\geq &\delta \mathbb{E}\int_{0}^{T}\big\{((v-\mathbb{E}v)-\Phi(x-\mathbb{E}x))^{2}+(\mathbb{E}v-\Psi(\mathbb{E}x))^{2}\big\}dt\\
=&\delta\left\|\mathcal{L}_{1} (v(t)-\mathbb{E}v(t))\right\|_{[0, T]}^{2}+ \delta \|\tilde{\mathcal{L}}_{1} (\mathbb{E}v(t))\|_{[0, T]}^{2}\\
=&\delta\frac{1}{\|\mathcal{L}_{1}^{-1}\|}\left\|v(t)-\mathbb{E}v(t)\right\|_{[0, T]}^{2}+ \delta\frac{1}{\|\tilde{\mathcal{L}}_{1}^{-1}\|} \|\mathbb{E}v(t)\|_{[0, T]}^{2}\\
\geq &\varepsilon(\|v(t)-\mathbb{E}v(t)\|_{[0, T]}^{2}+\|\mathbb{E}v(t)\|_{[0, T]}^{2})\\
>&0,
\end{aligned}
\end{equation*}
which yields $\|\tilde{\mathcal{L}}\|<\gamma$. The sufficiency of MF-SJBRL is completely proved.

Necessity:
We study the global solvability of DREs. The function
\begin{equation*}
f\left(t,P\right)= \mathcal{S}(P)-\mathcal{G}(P)\Sigma_{0}^{-1}(P)\mathcal{G}^{\prime}(P) - \dot{P}
\end{equation*}
is continuously differentiable on $[0,T]\times D_{f}$, where $D_{f}=\left\{P: \operatorname{det}\left(\Sigma_{0}^{\gamma}(t, P(t))\right) \neq 0\right\}$.
The global solution of DREs is equivalent to the solution of
$$P(t)=P(T)+\int_{t}^{T}f\left(t,P\right)dt.$$
Define $\varphi(\hat{P})=-\Sigma_{0}(\hat{P})^{-1}\mathcal{G}^{\prime}(\hat{P})$, and
\begin{equation}\label{quasi}
F(t,P;\hat{P})= \left(\begin{array}{c}
                                   I \\
                                   \varphi(\hat{P})
                                 \end{array}\right)^{\prime}\left(
                                 \begin{array}{cc}
  \mathcal{S}(P)  & \mathcal{G}(P) \\
  \mathcal{G}^{\prime}(P)& \Sigma_{0}(P)
\end{array}
\right)
                                 \left(
                                 \begin{array}{c}
                                   I \\
                                   \varphi(\hat{P})
                                 \end{array}
                                 \right).
\end{equation}
Obviously,
\begin{equation*}
\begin{aligned}
&F(t,P;\hat{P})= \dot{P}+P (A_{11}+B_{11}\varphi(\hat{P}))+(A_{11}+B_{11}\varphi(\hat{P}))^{\prime}P\\
&+(C_{11}+D_{11}\varphi(\hat{P}))^{\prime} P (C_{11}+D_{11}\varphi(\hat{P})) \\
&+\int_G\{(E_{11}+F_{11}\varphi(\hat{P}))(\theta)^{\prime}P (E_{11}+F_{11}\varphi(\hat{P}))(\theta)\}\nu(d\theta)\\
& -M_{11}^{\prime} M_{11} +\gamma^{2}\varphi(\hat{P})^{\prime}\varphi(\hat{P}).
\end{aligned}
\end{equation*}
Then construct an iteration sequence below. At first,
let $\hat{P}=0$, then
\begin{equation*}
\begin{cases}
    F(t,P_{1};\hat{P})=0,\\
    P_{1}(T)=0.
\end{cases}
\end{equation*}
It is a linear ordinary differential equation which has a unique solution $P_{1}$. Next, let $\hat{P}=P_{1}$, then
\begin{equation*}
\begin{cases}
    F(t,P_{2};P_{1})=0,\\
    P_{2}(T)=0.
\end{cases}
\end{equation*}

Repeat the above step to obtain the sequence $\{P_{n}\}_{n=1}^{\infty}.$ And
\begin{equation*}
\begin{aligned}
&-d(P_{n}-P_{n+1})\\
=&(P_{n}-P_{n+1}) \tilde{A}_{n}+\tilde{A}_{n}^{\prime} (P_{n}-P_{n+1})+\tilde{C}_{n}^{\prime} (P_{n}-P_{n+1}) \tilde{C}_{n} \\
& +\int_G\{\tilde{E}_{n}(\theta)^{\prime}(P_{n}-P_{n+1}) \tilde{E}_{n}(\theta)\}\nu(d\theta)\\
&+P_{n}B_{11}(\varphi(P_{n-1})-\varphi(P_{n}))+(\varphi(P_{n-1})-\varphi(P_{n}))^{\prime}B_{11}^{\prime}P_{n}\\
&-\gamma^{2}\varphi(P_{n})^{\prime}\varphi(P_{n})+\gamma^{2}\varphi(P_{n-1})^{\prime}\varphi(P_{n-1})\\
&+\tilde{C}_{n-1}^{\prime} P_{n} \tilde{C}_{n-1}-\tilde{C}_{n}^{\prime} P_{n} \tilde{C}_{n}\\
&+\int_G\{\tilde{E}_{n-1}(\theta)^{\prime}P_{n} \tilde{E}_{n-1}(\theta)\}\nu(d\theta)\\
&-\int_G\{\tilde{E}_{n}(\theta)^{\prime}P_{n} \tilde{E}_{n}(\theta)\}\nu(d\theta)\\
=&(P_{n}-P_{n+1}) \tilde{A}_{n}+\tilde{A}_{n}^{\prime} (P_{n}-P_{n+1})+\tilde{C}_{n}^{\prime} (P_{n}-P_{n+1})\cdot \\
&\tilde{C}_{n} +\int_G\{\tilde{E}_{n}(\theta)^{\prime}(P_{n}-P_{n+1}) \tilde{E}_{n}(\theta)\}\nu(d\theta)\\
&+(\varphi(P_{n-1})-\varphi(P_{n}))^{\prime}
\Sigma_{0}(P_{n})(\varphi(P_{n-1})-\varphi(P_{n})),\\
\end{aligned}
\end{equation*}
where
$\tilde{A}_{n}=A_{11}+B_{11}\varphi(P_{n}),\tilde{C}_{n}=C_{11}+D_{11}\varphi(P_{n}),\tilde{E}_{n}=E_{11}+F_{11}\varphi(P_{n}).$
By Lemma \ref{lemmaPpositive}, $(P_{n}-P_{n+1})\geq 0.$
Repeat the same procedure, we can also get the decreasing sequence $\{Q_{n}\}_{n=1}^{\infty}$.

By Lemma \ref{lemmalinearj1} and Lemma \ref{lemmainitial}, when $\xi=x W, x\in\mathbb{R}^{n}$,
\begin{multline}
J_{1}\left(0,\varphi(x^{\varphi,\psi}-\mathbb{E}x^{\varphi,\psi})+ \psi\mathbb{E}x^{\varphi,\psi}; t, \xi\right)=\\
\mathbb{E}\left\langle xW, P_{n}xW\right\rangle\geq -\mu \mathbb{E}|xW|^{2}.\nonumber
\end{multline}
Then $P_{n}(t)\geq -\mu I$ for $t \in[0,T].$ When $\xi\in\mathbb{R}^{n}$, we have
\begin{multline}
J_{1}\left(0,\varphi(x^{\varphi,\psi}-\mathbb{E}x^{\varphi,\psi})+ \psi\mathbb{E}x^{\varphi,\psi}; t, \xi\right)\\
=\left\langle\mathbb{E}\xi, Q_{n}\mathbb{E}\xi\right\rangle\geq -\mu \mathbb{E}|\xi|^{2}.\nonumber
\end{multline}
Then $Q_{n}(t)\geq -\mu I$ for $t \in[0,T].$

Considering that $0\geq P_{1} \geq P_{2} \geq \ldots \geq P_{n} \geq \ldots \geq -\mu I$ and $0\geq Q_{1} \geq Q_{2} \geq \ldots \geq  Q_{n} \geq \ldots \geq -\mu I$. By monotone convergence theorem, there exists $P,Q$ such that $P_{n}\rightarrow P,$ $Q_{n}\rightarrow Q.$ Because of Lebesgue's dominated convergence theorem,
\begin{equation*}
\begin{aligned}
P(t)&=\lim_{n\rightarrow\infty}P_{n}(t)=P(T)+\lim_{n\rightarrow\infty}\int_{t}^{T}f\left(s,P_{n};P_{n-1}\right)ds\\
&=P(T)+\int_{t}^{T}\lim_{n\rightarrow\infty}f\left(s,P_{n};P_{n-1}\right)ds\\
&=P(T)+\int_{t}^{T}f\left(s,P;P\right)ds
\end{aligned}
\end{equation*}
satisfies (\ref{3.4}).
Moreover, by Lemma \ref{main},
\begin{equation*}\Sigma_{0}(t, P(t))=\lim_{n\rightarrow\infty} \Sigma_{0}(t, P_{n}(t))\geq \delta I > 0,
\end{equation*}
 so is $\Sigma_{2}^{\gamma}(t, P(t))$ on $[0,T]$.
Repeating the procedure for $Q_{n}$, and then derives the DRE (\ref{3.4})-(\ref{3.5}) having a solution $(P,Q)$ on $[0,T].$

Suppose $\widetilde{P} \in C\left([0, T] ; \mathbb{R}^{n \times n}\right)$ is another solution of (\ref{3.4}). Set $\widehat{P} \triangleq P-\widetilde{P}$. Then $\widehat{P}$ satisfies
\begin{equation*}
\left\{
\begin{aligned}
&\mathcal{S}(\widehat{P})+M_{11}^{\prime}M_{11}-\mathcal{G}^{\prime}(\widehat{P})\Sigma_{0}^{-1}(P) \mathcal{G}^{\prime}(P)\\
&-\mathcal{G}(\tilde{P})\Sigma_{0}^{-1}(\widetilde{P})\mathcal{G}^{\prime}(\widehat{P}) \\
&+\mathcal{G}(\widetilde{P}) \Sigma_{0}^{-1}(P) D_{11}^{\prime} \widehat{P} D_{11} \Sigma_{0}^{-1}(\widetilde{P})\mathcal{G}^{\prime}(\widetilde{P})+\mathcal{G}(\widetilde{P})\cdot\\
& \Sigma_{0}^{-1}(P) \int_{G}F_{11}^{\prime} \widehat{P} F_{11} \nu(d\theta) \Sigma_{0}^{-1}(\widetilde{P})\mathcal{G}^{\prime}(\widetilde{P})=0 ,\\
&\widehat{P}(T) =0,
\end{aligned}
\right.
\end{equation*}
where $\Sigma_{0}(P)>0$ and $\Sigma_{0}(\widetilde{P}) >0$. Since $\left|\Sigma_{0}^{-1}(P)\right|$ and $\left|\Sigma_{0}^{-1}(\widetilde{P})\right|$ are uniformly bounded due to their continuity, we can apply Gronwall's inequality to get $\widehat{P}(t) \equiv 0$. This proves the uniqueness of the equation (\ref{3.4}).
Repeating the previous steps, the uniqueness for equation (\ref{3.5}) is derived due to the uniform boundedness of all the coefficients. The proof is completed.
\end{proof}

\section*{References}
\bibliographystyle{IEEEtran}
\bibliography{reference}

\end{document}